\tikzset{every loop/.style={min distance=10mm,looseness=10}}
\tikzset{every state/.style={minimum size=2mm}}
\tikzstyle arrowstyle=[scale=1]
\tikzstyle directed=[postaction={decorate,decoration={markings,mark=at position 0.7 with {\arrow[arrowstyle]{stealth};}}}]
\newtheorem{theorem}{Theorem}
\newtheorem{lemma}[theorem]{Lemma}
\newtheorem{corollary}[theorem]{Corollary}
\title{On the word-representability of $K_m$-$K_n$  graphs}
\author{Herman~Z.~Q.~Chen\footnote{School of Mathematical Sciences; Chongqing Key Lab of Cognitive Intelligence and Intelligent Finance, Chongqing Normal University, Chongqing 400047, China.  {\bf Email:} chern@cqnu.edu.cn},\ \ Humaira Hameed\footnote{Department of Mathematics and Statistics, University of Strathclyde, 26 Richmond Street, Glasgow G1, 1XH, United Kingdom. 
{\bf Email:} humaira.hameed@strath.ac.uk.}\ \ and Sergey Kitaev\footnote{Department of Mathematics and Statistics, University of Strathclyde, 26 Richmond Street, Glasgow G1, 1XH, United Kingdom. 
{\bf Email:} sergey.kitaev@strath.ac.uk.}}
\begin{document}
	\maketitle
	
\begin{abstract}
Word-representable graphs are a class of graphs that can be represented by words, where edges and non-edges are determined by the alternation of letters in those words. Several papers in the literature have explored the word-representability of split graphs, in which the vertices can be partitioned into a clique and an independent set. In this paper, we initiate the study of the word-representability of graphs in which the vertices can be partitioned into two cliques. We provide a complete characterization of such word-representable graphs in terms of forbidden subgraphs when one of the cliques has a size of at most four. In particular, if one of the cliques is of size four, we prove that there are seven minimal non-word-representable graphs. \\

\noindent
{\bf Keywords:} word-representable graph; semi-transitive graph; semi-transitive orientation\\

\noindent
{\bf 2020 Mathematics Subject Classification:} 05C62, 20M99\\

\end{abstract}

\section{Introduction}

Two letters $x$ and $y$ alternate in a word $w$ if after deleting in $w$ all letters but the copies of $x$ and $y$ we either obtain a word $xyxy\cdots$ or a word $yxyx\cdots$ (of even or odd length).  A graph $G=(V,E)$ is {\em word-representable} if there exists a word $w$
over the alphabet $V$ such that letters $x$ and $y$, $x\neq y$, alternate in $w$ if and only if $xy\in E$; each letter in $V$ must appear in $w$. The unique minimal (by the number of vertices) non-word-representable graph on 6 vertices is the wheel graph $W_5$ (which is the cycle graph $B_5$ with an all adjacent vertex added), while there are 25 non-word-representable graphs on 7 vertices and they can be found in \cite{KS}. 

The introduction of word-representable graphs was influenced by alternation word digraphs used in \cite{KitSeif}, which served as a tool to study the celebrated Perkins semigroup, {\bf $B^1_2$}, a central concept in semigroup theory since 1960, particularly as a source of examples and counterexamples.

An orientation of a graph is {\em semi-transitive} if it is acyclic (there are no directed cycles), and for any directed path $v_0 \rightarrow v_1 \rightarrow \cdots \rightarrow v_k$, either there is no edge between $v_0$ and $v_k$, or $v_i \rightarrow v_j$ is an edge for all $0 \leq i < j \leq k$. An undirected graph is {\em semi-transitive} if it admits a semi-transitive orientation. A shortcut in an acyclically oriented graph $G$ is an induced {\em non-transitive} subgraph $G'$ on vertices $\{v_0, v_1, \ldots, v_k\}$, $k \geq 3$, containing $v_0 \rightarrow v_1 \rightarrow \cdots \rightarrow v_k$ and $v_0 \rightarrow v_k$ (that is, $v_i \rightarrow v_j$ is not present in $G'$ for at least one pair of $i$ and $j$ with $0 \leq i < j \leq k$). The edge $v_0 \rightarrow v_k$ is called {\em shortcutting edge}.

A fundamental result in the area of word-representable graphs is the following theorem.

\begin{theorem}[\cite{HKP16}]\label{fundamental} A graph is word-representable if and only if it admits a semi-transitive orientation. \end{theorem}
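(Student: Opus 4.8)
The plan is to prove the two implications separately, and I expect the constructive ``if'' direction to carry essentially all of the difficulty. For the ``only if'' direction, suppose $G$ is word-representable by a word $w$ over $V$. I would orient each edge $xy\in E$ by setting $x\rightarrow y$ precisely when the first occurrence of $x$ precedes that of $y$ in $w$. Since the position of a vertex's first occurrence gives a linear order on $V$ respected by every edge, this orientation is immediately acyclic. The real content is semi-transitivity: assuming a shortcut exists, i.e.\ a directed path $v_0\rightarrow v_1\rightarrow\cdots\rightarrow v_k$ together with the edge $v_0\rightarrow v_k$ but with some pair $v_i,v_j$ ($i<j$) a non-edge, I would take such a configuration with $k$ minimal and translate all the adjacency data into interleaving constraints on the occurrences of $v_0,\ldots,v_k$ in $w$: each edge forces its two letters to alternate, while each non-edge forbids it. A short case analysis on the relative positions of the occurrences then produces contradictory inequalities, exactly as one verifies by hand in the smallest case $v_0\rightarrow v_1\rightarrow v_2\rightarrow v_3$ with chord $v_0\rightarrow v_3$ and missing edge $v_0v_2$, where the alternation conditions cannot coexist. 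This rules out shortcuts and yields a semi-transitive orientation.

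For the converse I must build a representing word from a given semi-transitive orientation, and here I would first isolate the transitive base case. If the orientation happens to be transitive, then $G$ is a comparability graph, and I would represent it permutationally by concatenating linear extensions $\pi_1\pi_2\cdots\pi_t$ of the associated partial order. In such a concatenation two letters alternate if and only if they keep the same relative order in every $\pi_i$, that is, if and only if they are comparable; since every partial order is the intersection of its linear extensions, the resulting word represents $G$ exactly.

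The general (non-transitive) case I would handle by induction on $|V|$: fix a topological order $v_1,\ldots,v_n$, note that the induced orientation on $v_1,\ldots,v_{n-1}$ is again semi-transitive, and assume a word $w'$ represents it. The vertex $v_n$ is a sink, so every edge at $v_n$ points into it, and the task is to splice the correct number of copies of $v_n$ into $w'$ so that $v_n$ alternates with precisely its in-neighbours and with none of its non-neighbours, without disturbing any alternation already present among the old letters. This insertion step is the main obstacle and the heart of the theorem: one cannot naively append a single copy, and indeed adding an adjacent vertex does not in general preserve word-representability, so the argument must use the no-shortcut hypothesis on the full orientation to constrain the in-neighbourhood of $v_n$ enough that a consistent placement always exists. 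I would try to make this precise by tracking, for each old letter, the position/parity pattern it exhibits in $w'$ and identifying the blocks between which copies of $v_n$ may be legally inserted, then invoking semi-transitivity to show these local requirements are simultaneously satisfiable. Carrying this insertion argument through cleanly, rather than the two comparatively routine ingredients (acyclicity and the comparability-graph base case), is where I expect the genuine work to lie.
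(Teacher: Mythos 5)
You should first note that the paper contains no proof of this statement: Theorem~\ref{fundamental} is quoted from \cite{HKP16}, so your attempt can only be judged against the proof in that reference. Your ``only if'' direction is essentially the argument given there: orient each edge toward the letter with the later first occurrence, observe that the first-occurrence order makes this acyclic, and show that the alternation constraints rule out shortcuts. This half is sound in outline and completable; the published proof does it via a lemma stating that if the two endpoints of a directed path (in this orientation) alternate in $w$, then every pair of path vertices alternates, rather than by your minimality-of-$k$ case analysis, but that difference is cosmetic. Your base case for the converse --- representing a transitive orientation by concatenating linear extensions, i.e.\ permutational representability of comparability graphs --- is also correct and standard.

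The genuine gap is exactly where you yourself locate it: the insertion step in the hard direction is never carried out, and the induction you set up would fail as stated. Starting from an \emph{arbitrary} word $w'$ representing $G-v_n$ and splicing in copies of the sink $v_n$ is not in general possible: the occurrences of the in-neighbours of $v_n$ may be distributed in $w'$ in a pattern admitting no legal placement of the new letter, and nothing in your sketch rules this out. What the proof in \cite{HKP16} controls, and your plan does not, is the \emph{structure} of the inductive word: they prove that a semi-transitively oriented graph is $2(n-\kappa)$-representable by induction on the number of vertices outside a fixed maximum clique (base case a clique, represented by a single permutation), maintaining at every stage a \emph{uniform} word, exploiting that uniform words can be cyclically shifted without changing the represented graph, and inserting each new vertex together with additional copies of old letters so that uniformity is restored; the no-shortcut hypothesis on the full orientation is invoked precisely to show that the in- and out-neighbourhoods of the added vertex are positioned compatibly with such an insertion. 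Without a uniformity (or similar) invariant and an explicit bound on multiplicities, ``tracking position/parity patterns and invoking semi-transitivity to show the local requirements are simultaneously satisfiable'' is a research plan rather than a proof --- and since this insertion construction is the actual content of the theorem, your proposal establishes only the easy direction.
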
 

\begin{corollary}[\cite{HKP16}]\label{3-col} Any $3$-colourable graph is word-representable. \end{corollary}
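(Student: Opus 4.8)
The plan is to use Theorem~\ref{fundamental}, which lets me replace the goal ``word-representable'' by the equivalent goal ``admits a semi-transitive orientation.'' So it suffices to construct, for an arbitrary $3$-colourable graph $G=(V,E)$, one semi-transitive orientation. First I would fix a proper $3$-colouring $c\colon V\to\{1,2,3\}$ and orient each edge from its lower-coloured endpoint to its higher-coloured one: whenever $uv\in E$, set $u\rightarrow v$ if $c(u)<c(v)$. Because the colouring is proper, adjacent vertices get distinct colours, so every edge receives a well-defined orientation.

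Next I would check acyclicity. Along any directed path $v_0\rightarrow v_1\rightarrow\cdots\rightarrow v_k$ the colours strictly increase, i.e.\ $c(v_0)<c(v_1)<\cdots<c(v_k)$, so no vertex can recur and there is no directed cycle. The same monotonicity yields the decisive length bound: a strictly increasing sequence drawn from the three values $\{1,2,3\}$ has at most three terms, so every directed path satisfies $k\le 2$, i.e.\ consists of at most three vertices.

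Finally I would rule out shortcuts. By the definition recalled in the excerpt, a shortcut is an induced non-transitive subgraph on vertices $v_0,v_1,\ldots,v_k$ with $k\ge 3$, carrying both the directed path $v_0\rightarrow\cdots\rightarrow v_k$ and the shortcutting edge $v_0\rightarrow v_k$. Since the length bound just established forbids any directed path on four or more vertices, no such configuration can arise, so the orientation has no shortcut. An acyclic orientation with no shortcut is precisely a semi-transitive orientation (this is immediate from the two definitions, as the shortcut is exactly the obstruction named in the semi-transitivity condition), whence $G$ is semi-transitive and Theorem~\ref{fundamental} gives that $G$ is word-representable.

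The argument is genuinely short, and I expect no serious obstacle; the only points requiring care are purely definitional. One must align the ``acyclic plus shortcut-free'' formulation with the semi-transitivity condition, and observe that the shortcutting configuration needs at least four vertices. The entire weight of the proof rests on the elementary observation that three colours confine every directed path to at most three vertices, which annihilates any potential shortcut before it can form. (If one prefers to avoid invoking the shortcut/semi-transitive equivalence, the $k\le 2$ case can be verified directly: for a path $v_0\rightarrow v_1\rightarrow v_2$ with an edge between $v_0$ and $v_2$, that edge is oriented $v_0\rightarrow v_2$ since $c(v_0)<c(v_2)$, so all required edges are present.)
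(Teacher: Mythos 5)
Your proof is correct. The paper itself gives no proof of this corollary, citing it directly from \cite{HKP16}; your argument---orient each edge from the lower-coloured endpoint to the higher-coloured one, observe that directed paths then have at most three vertices while a shortcut requires at least four, and conclude semi-transitivity and hence word-representability via Theorem~\ref{fundamental}---is precisely the standard proof from that reference, and your closing remark correctly disposes of the only delicate point, namely that the $k\le 2$ instances of the semi-transitivity condition are forced by the colour monotonicity (or already by acyclicity).
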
 

There is an extensive line of research in the literature on word-representable graphs~\cite{KL}. Most relevant to our work is a series of papers~\cite{CKS21,I2021,IK2021,KLMW17,KP1} that explore the word-representability of split graphs. A split graph is a graph whose vertices can be partitioned into a clique and an independent set~\cite{FH77}. Split graphs appear in the literature in various contexts (see, for example,~\cite{CJKS2020} and references therein).

In this paper, we extend the study of word-representability of split graphs to what we call {\em $K_m$-$K_n$ graphs} by characterizing, in terms of forbidden subgraphs, word-representable $K_m$-$K_n$ graphs for any $1\leq m\leq 4$ and $n\geq 1$. A $K_m$-$K_n$ graph is one that can be split into cliques $K_m$ and $K_n$, where $K_m$ is maximal; that is, there is no vertex in $K_n$ that is connected to every vertex in $K_m$. Let $V(K_m)=\{1,\ldots,m\}$ and $V(K_n)=\{m+1,\ldots,m+n\}$, where for a graph $G$,  $V(G)$ is the set of vertices in $G$. 

Both split graphs and $K_m$-$K_n$ graphs are relevant to the {\em speed of hereditary classes of graphs} and their asymptotic structure, which have been extensively studied in the literature. These concepts were used in \cite{KL1} to find asymptotics for the number of word-representable graphs. To elaborate, it is known that for every 
hereditary class $X$, that is different from the class of all finite graphs,
\begin{equation*}\label{eq:0}
\lim\limits_{n\to\infty}\frac{\log_2 X_n}{\binom{n}{2}}=1-\frac{1}{k(X)},
\end{equation*}
where $k(X)$ is a natural number known as the {\it index} of $X$. To define this notion 
let us denote by ${\cal E}_{i,j}$ the class of graphs whose vertices can be partitioned
into at most $i$ independent sets and $j$ cliques. In particular, ${\cal E}_{p,0}$
is the class of $p$-colourable graphs.
Then $k(X)$ is the largest $k$ such that $X$ contains ${\cal E}_{i,j}$ with $i+j=k$ for some $i$ and $j$. 
This result was obtained independently by Alekseev \cite{Ale92} and Bollob\'{a}s and Thomason  \cite{BT95,BT97}
and is known nowadays as the Alekseev-Bollob\'{a}s-Thomason Theorem (see e.g. \cite{ABT-theorem}).

\subsection{Relevant tools to study word-representability of graphs}
The following theorem is a special case of Theorem 5.4.7 in~\cite{KL}, and it allows the reduction of questions about the word-representability of arbitrarily large graphs within certain graph families to smaller, more manageable graphs.

\begin{theorem}[\cite{KL}]\label{same-neighbourhood}
Suppose $u$ and $v$ are two vertices in a graph $G$ and neighbourhoods of $u$ and $v$ are the same, except $u$ and $v$ are allowed to be connected. Moreover, let $G'$ be obtained from $G$ by removing $u$. Then, $G$ is word-representable if and only if $G'$ is word-representable. 
\end{theorem}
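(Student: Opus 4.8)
The plan is to prove Theorem~\ref{same-neighbourhood} by exploiting the characterization in Theorem~\ref{fundamental}: rather than manipulating representing words directly, I would work with semi-transitive orientations, since the condition that $u$ and $v$ have identical neighbourhoods (outside the possible edge $uv$) is most naturally exploited at the level of orientations. The forward direction is immediate, because word-representability is hereditary under taking induced subgraphs, and $G'$ is an induced subgraph of $G$; so if $G$ is word-representable then so is $G'$. The substance is the reverse implication: assuming $G'$ admits a semi-transitive orientation, I must construct a semi-transitive orientation of $G$.

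\medskip

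First I would fix a semi-transitive orientation of $G'$ and observe that $v$ (which survives in $G'$) has some orientation on each of its incident edges. The key idea is to orient every edge incident to $u$ in $G$ so as to \emph{copy} the orientation of the corresponding edge at $v$: for each common neighbour $w$, direct $u\to w$ if and only if $v\to w$ in $G'$, and then handle the single possible edge $uv$ separately by orienting it consistently (say $u\to v$ if that does not create a problem, or choosing the direction dictated by avoiding a cycle). Because $u$ and $v$ have the same neighbours, this makes $u$ into a ``twin'' of $v$ whose out- and in-neighbourhoods among the common vertices coincide exactly with those of $v$.

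\medskip

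The heart of the argument is then to verify that this extended orientation on $G$ is still semi-transitive, i.e.\ acyclic with no shortcut. For acyclicity, any directed cycle through $u$ could be rerouted through $v$ (replacing $u$ by $v$, using that their neighbourhoods agree) to yield a directed cycle in the orientation restricted to $G'$, contradicting acyclicity there; one must check the case where the cycle uses the edge $uv$ itself, which is short and handled directly. For the shortcut condition, I would argue that any shortcut in $G$ involving $u$ projects to a shortcut involving $v$ in $G'$: a directed path $v_0\to\cdots\to v_k$ together with a shortcutting edge $v_0\to v_k$ and a missing edge $v_i\to v_j$, if it passes through $u$, can be translated into the analogous configuration with $u$ replaced by $v$, again using that $u$ and $v$ share neighbours and hence share the same adjacency and orientation pattern.

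\medskip

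The main obstacle I anticipate is precisely the bookkeeping around the optional edge $uv$: when $u$ and $v$ are adjacent, a directed path or cycle may traverse the edge $uv$, so the clean ``replace $u$ by $v$'' substitution can create degeneracies (a path visiting both $u$ and $v$, or a length-two segment $u\to v$ collapsing) that must be checked separately. Handling these boundary cases carefully — ensuring that removing or rerouting through the $uv$ edge never manufactures a new shortcut or destroys acyclicity — is where the real care is needed, whereas the bulk of the path/cycle translation is routine once the twin structure is set up.
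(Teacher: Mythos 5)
Your plan is sound, but note that the paper contains no proof of Theorem~\ref{same-neighbourhood} at all: it is imported as a special case of Theorem~5.4.7 in \cite{KL}, whose argument is word-based, not orientation-based. There one takes a (uniform) word $w$ representing $G'$ and builds a word for $G$ by local substitution at the letter $v$: for an adjacent twin, replace every occurrence of $v$ by $uv$ (so $u$ and $v$ alternate and $u$ inherits exactly $v$'s alternation pattern with all other letters); for a non-adjacent twin, insert $u$ next to $v$ with alternating sides ($vu$, then $uv$, etc.), which destroys the alternation of $u$ with $v$ while preserving it with everyone else. That route is short, explicit, and needs nothing beyond the definition, whereas your route goes through Theorem~\ref{fundamental} and the semi-transitive machinery. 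Your version does go through, and the boundary cases you flag close cleanly, with one fact worth making explicit since it is the crux: if both twins $u$ and $v$ lie on a directed path and are \emph{not} consecutive, then copying orientations already yields a directed cycle in $G'$ (e.g.\ $u\rightarrow b$ forces $v\rightarrow b$, and the path segment from $b$ to $v$ closes the cycle), so this situation never arises in a putative shortcut; and if they are consecutive via the edge $u\rightarrow v$, contracting $u$ onto $v$ yields a configuration in $G'$ in which any missing pair transfers by the twin property ($v_au\notin E$ iff $v_av\notin E$, etc.), the only degenerate case being a path on four vertices, where the twin property forces all chords to be present, i.e.\ a transitive tournament rather than a shortcut. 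So your proposal is a correct, genuinely different proof: it trades the one-line word surgery of \cite{KL} for a case analysis, but it stays entirely inside the semi-transitive framework that the rest of this paper actually uses, and it generalizes in the same way (replacing the two-vertex module $\{u,v\}$ by an arbitrary module carrying its own semi-transitive orientation).
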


The following two lemmas are instrumental for us in proving non-word-representability of graphs. A vertex in a graph $G$ is a {\em source} (resp., {\em sink}) if all edges incident with $v$ are oriented outwards (resp., towards) it.

\begin{lemma}[\cite{KP}]\label{lemma} Suppose that an undirected graph $G$ has a cycle $C=x_1x_2\cdots x_mx_1$, where $m\geq 4$ and the vertices in $\{x_1,x_2,\ldots,x_m\}$ do not induce a clique in $G$.  If $G$ is oriented semi-transitively, and $m-2$ edges of $C$ are oriented in the same direction (i.e. from $x_i$ to $x_{i+1}$ or vice versa, where the index $m+1:=1$) then the remaining two edges of $C$ are oriented in the opposite direction.\end{lemma}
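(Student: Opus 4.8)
The plan is to argue by contradiction, ruling out every way in which fewer than two of the edges could point "against" the common direction of the $m-2$ edges. First I would normalise the situation: by relabelling we may assume the cycle is written so that the $m-2$ edges sharing a direction are all oriented \emph{forward}, i.e.\ as $x_i \rightarrow x_{i+1}$ (indices taken mod $m$). This is harmless, since reversing every edge of $G$ simultaneously carries a semi-transitive orientation to a semi-transitive orientation (both acyclicity and the directed-path condition are symmetric under global reversal), fixes the underlying undirected graph so that cycles go to cycles and cliques to cliques, and simply interchanges "forward" with "backward". It therefore suffices to prove that the two remaining edges are both oriented backward, i.e.\ as $x_{i+1} \rightarrow x_i$.

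Suppose, for contradiction, that at least one of the two remaining edges is also oriented forward, so that at least $m-1$ of the $m$ cycle edges point forward. I would then split into two cases according to the count. If all $m$ edges point forward, then $x_1 \rightarrow x_2 \rightarrow \cdots \rightarrow x_m \rightarrow x_1$ is a directed cycle, contradicting the acyclicity that is part of the definition of a semi-transitive orientation. Hence exactly $m-1$ edges point forward, and a single edge, say the one joining $x_j$ and $x_{j+1}$, points backward as $x_{j+1} \rightarrow x_j$.

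The key step is to read off the directed path obtained by deleting this one backward edge from the cycle. Starting at $x_{j+1}$ and following the $m-1$ forward edges around the cycle yields a directed path
\[ P:\quad x_{j+1} \rightarrow x_{j+2} \rightarrow \cdots \rightarrow x_m \rightarrow x_1 \rightarrow \cdots \rightarrow x_j \]
of length $m-1 \geq 3$ whose vertex set is exactly $\{x_1,\ldots,x_m\}$. Its endpoints $x_{j+1}$ and $x_j$ are joined by the backward edge, oriented $x_{j+1}\rightarrow x_j$, that is, precisely from the first vertex of $P$ to its last. Applying the definition of a semi-transitive orientation to $P$ together with this edge between its endpoints forces $x_a \rightarrow x_b$ to be an edge for every pair of vertices occurring in that order along $P$; in particular every two of $x_1,\ldots,x_m$ are adjacent. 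Thus $\{x_1,\ldots,x_m\}$ induces a clique, contradicting the hypothesis, and the lemma follows.

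I expect the main obstacle to be essentially bookkeeping rather than conceptual: one must treat the backward edge in an arbitrary position $j$, not a convenient one, and verify that the wrap-around path $P$ has its two endpoints joined by the shortcutting edge in exactly the orientation that triggers the semi-transitivity condition (so that it is the directed-path clause, and not merely acyclicity, that produces the forbidden clique). Once the indices are arranged so that $P$ begins at $x_{j+1}$ and ends at $x_j$, the contradiction is immediate.
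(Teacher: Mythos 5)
Your proof is correct: the reduction to the two cases (all $m$ edges forward, giving a directed cycle; exactly $m-1$ forward, giving a directed path on all of $\{x_1,\ldots,x_m\}$ whose endpoints are joined by the edge $x_{j+1}\rightarrow x_j$, so semi-transitivity forces an induced clique) is exactly the standard argument. The paper itself states this lemma as an imported result from \cite{KP} without reproducing a proof, and your argument is essentially the one given in that reference, so there is nothing to add.
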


\begin{lemma}[\cite{KS}]\label{source-thm} Suppose that a graph $G$ is word-representable, and $v$ is a vertex in $G$. Then, there exists a semi-transitive orientation of $G$ where $v$ is a source (or a sink).   
 \end{lemma}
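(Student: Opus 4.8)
The plan is to reduce the statement to a single clean normal form for the representing word and then read off the orientation directly. First I would observe that it suffices to produce a semi-transitive orientation in which $v$ is a \emph{source}: reversing the direction of every edge of a semi-transitive orientation again yields a semi-transitive orientation (acyclicity is clearly preserved, and the shortcut-forbidding condition is symmetric, since a shortcut on a path $v_0\to v_1\to\cdots\to v_k$ with edge $v_0\to v_k$ becomes, after relabelling, a shortcut on $v_k\to\cdots\to v_0$ with edge $v_k\to v_0$), and this operation turns a source into a sink. Thus the ``or a sink'' case is automatic once the source case is settled.

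For the source case I would not work with an arbitrary representing word but with a \emph{uniform} one. I would recall the standard fact that a word-representable graph admits, for some $k\ge 1$, a $k$-uniform representing word $w$, i.e.\ one in which every letter of $V$ occurs exactly $k$ times (see, e.g., \cite{KL}). The key feature to exploit is that cyclic shifts of a uniform word represent the same graph. Indeed, for any two letters $x,y$ the projection $w|_{\{x,y\}}$ has length $2k$ with $k$ copies of each letter, and such a word is perfectly alternating (hence $xy\in E$) if and only if every one of its cyclic shifts is; since cyclically shifting $w$ cyclically shifts each projection $w|_{\{x,y\}}$, adjacency is unchanged. I would then pick any occurrence of $v$ and cyclically rotate $w$ so that this occurrence moves to the front, obtaining a word $w' = vB$ that still represents $G$.

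Finally I would invoke the constructive forward direction of Theorem~\ref{fundamental}: from a word representing $G$ one obtains a semi-transitive orientation by orienting each edge $xy$ from the endpoint whose first occurrence is earlier to the one whose first occurrence is later. Applying this to $w'$, the letter $v$ is literally the first symbol of $w'$, so its first occurrence precedes that of every other letter; consequently every edge incident with $v$ is oriented away from $v$, and $v$ is a source in this semi-transitive orientation. Reversing all edges, as above, then gives the sink version.

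The step I expect to be most delicate is the cyclic-shift invariance, and in particular the role of uniformity there: for non-uniform words a cyclic shift can create or destroy alternation (for instance $xxy$ is non-alternating while its shift $xyx$ is alternating), so passing to a uniform representation is not a mere convenience but precisely the feature that makes the rotation argument valid. The other point to handle with care is that I must use the explicit word-to-orientation construction underlying Theorem~\ref{fundamental}, rather than only its statement, in order to control which vertex becomes the source.
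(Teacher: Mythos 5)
Your proposal is correct and follows essentially the same route as the proof behind the cited Lemma~\ref{source-thm} (the paper itself imports it from \cite{KS} without proof): pass to a $k$-uniform representant, use cyclic-shift invariance of uniform words to bring an occurrence of $v$ to the front, apply the first-occurrence orientation from the constructive forward direction of Theorem~\ref{fundamental} to make $v$ a source, and reverse all edges for the sink case. You also correctly identify the one delicate point, namely that uniformity is exactly what makes cyclic shifts adjacency-preserving, so nothing is missing.
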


\subsection{Our methodology and organization of the paper}\label{format-subsec}

Proving that a given graph is not word-representable, or equivalently, not semi-transitive, often involves examining all possible extensions of partial orientations and demonstrating that none results in a semi-transitive orientation. Lemmas~\ref{lemma} and~\ref{source-thm} are crucial here because they dramatically reduce the number of orientations to consider. We refer to \cite{KS} for more details on this approach.

By a ``line'' of a proof we mean a sequence of instructions that directs us in orienting a partially oriented graph and necessarily ends with detecting a shortcut showing that this particular orientation branch will not produce a semi-transitive orientation. The idea is that if no branch produces a semi-transitive orientation then the graph is non-semi-transitive. 

Each proof begins with assumptions on orientations of certain edges, and there are four types of instructions:
\begin{itemize}
\item ``B$a\rightarrow b$ (Copy $x$)'' means ``Branch on edge $ab$, orient the edge as $a\rightarrow b$, create a copy of the current version of the graph except orient the edge $ab$ there as $b\rightarrow a$, and call the new copy $x$; leave Copy $x$ aside and continue to follow the instructions''. The instruction B occurs when no application of Lemma~\ref{lemma} is possible in the partially oriented graph.
\item ``MC$x$'' means ``Move to Copy $x$'', where Copy $x$ of the graph in question is a partially oriented version of the graph that was created at some point in the branching process. This instruction is always followed by an oriented edge $a\rightarrow b$ to remind the reader of the directed edge obtained after the application of the branching process. 
\item ``O$a\rightarrow b$(C$abc$)'' means orient the edge $ab$ as $a\rightarrow b$ in the cycle $abc$ to avoid a directed cycle. If instead of a triangle we see a longer cycle, then we deal with an application of Lemma~\ref{lemma} to get a cycle where all but two edges are oriented in one direction, and one of the remaining two edges is oriented in the opposite direction.
\item ``O$a\rightarrow b$ O$c\rightarrow d$ (C$xyz\cdots$)'' means that Lemma~\ref{lemma} is applied to cycle $xyz\cdots$ to create new directed edges, $a\rightarrow b$ and $c\rightarrow d$. 
\end{itemize}

Each line ends with ``S: $xy\cdots z$'' indicating a shortcut with the shortcutting edge $x\rightarrow z$ is obtained. 

We note that for smaller graphs, particularly those with relatively few edges, proofs of non-word representability can be obtained in an automated manner using user-friendly software \cite{S}. Any proof of ``reasonable size'' can be easily verified \cite{KS}.

For a class of graphs in question (with a fixed $K_m$), we consider the graph on $m+2^m-1$ vertices obtained from $K_m$ by adding vertices that are connected to all possible subsets of vertices in $K_m$ of size at most $m-1$, ensuring that no two newly added vertices share the same neighbourhood. Then, we create the clique $K_n$, where  $n=2^m-1$, from the newly added vertices and denote the resulting graph (obtained by adding the new edges and vertices to $K_m$) as $H_m$. By Theorem~\ref{same-neighbourhood}, finding all minimal non-word-representable subgraphs for this class of graphs is equivalent to finding all minimal non-word-representable subgraphs for $H_m$, which can be achieved by sequentially removing the vertices of $H_m$ one by one in all possible ways. If $H_m$ is word-representable then all graphs in the class are word-representable.

To shorten our proofs of Theorems~\ref{thm-K3-Kn} and~\ref{K4-Kn-thm}, we first establish in Lemmas~\ref{lemma-K3-Kn} and~\ref{lemma-K4-Kn} the non-word-representability of all minimal non-word-representable graphs under consideration (as identified through computer experiments). We then use this result to argue that no other minimal non-word-representable graphs exist within the class. The latter is achieved through a branching process in which vertices are removed one by one, as described earlier. A branch terminates when a word-representable graph is obtained. At each step, the non-word-representability of a graph is demonstrated by explicitly identifying a minimal non-word-representable subgraph. Word-representability, on the other hand, can be established either by providing a semi-transitive orientation or by verifying that the graph is not among the 26 known non-word-representable graphs with at most 7 vertices. More commonly, however, this is done using publicly available, user-friendly, and independently developed software~\cite{Glen, S}, which significantly reduces the space required for this paper.

The paper is organized as follows. In Section~\ref{sec2}, we characterize word-representable $K_m$-$K_n$ graphs for $m \in {1, 2, 3}$. In Section~\ref{sec-K4-Kn}, we characterize word-representable $K_4$-$K_n$ graphs. Two of the cases in the proof of Theorem~\ref{K4-Kn-thm} in Section~\ref{sec-K4-Kn}, which together require more than five pages, have been moved to the Appendix. Finally, in Section~\ref{concluding}, we provide concluding remarks.

\section{$K_m$-$K_n$ graphs for $m\in\{1,2,3\}$}\label{sec2}

In what follows, if $u$ is a vertex in a graph $G$ and $G'$ is a subgraph of $G$, then $N_{G'}(u)$ is the set of neighbours of $u$ (the vertices connected to $u$ by an edge) in $G$ that belong to $G'$.

\begin{theorem}
	Any graph $K_m$-$K_n$, where $m\in\{1,2\}$ and $n\ge 1$, is word-representable.
\end{theorem}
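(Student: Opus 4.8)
The plan is to reduce, via Theorem~\ref{same-neighbourhood}, the problem from arbitrarily large $K_m$-$K_n$ graphs to a single small graph, and then to establish word-representability of that small graph by exhibiting a proper $3$-colouring and invoking Corollary~\ref{3-col}. Following the methodology of Section~\ref{format-subsec}, it suffices to prove that $H_m$ is word-representable for $m\in\{1,2\}$, since then all graphs in the class are word-representable.

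First I would treat $m=1$. Here $K_m$ is the single vertex $1$, and maximality forces every vertex of $K_n$ to be non-adjacent to $1$ (being adjacent to all of $K_m$ would just mean being adjacent to $1$). Thus all vertices of $K_n$ share the same (empty) neighbourhood inside $K_m$ and are mutually adjacent, so repeated application of Theorem~\ref{same-neighbourhood} collapses $K_n$ to a single representative; the resulting graph $H_1$ has just the two vertices $1$ and one representative of $K_n$, with no edge between them. This edgeless graph is trivially $3$-colourable, hence word-representable.

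The main case is $m=2$. The key observation is that two vertices $u,v\in K_n$ have identical neighbourhoods (apart from the edge $uv$, which is permitted) precisely when they have the same neighbours inside $K_m=\{1,2\}$, because inside the clique $K_n$ every vertex is adjacent to all the others. By maximality no vertex of $K_n$ is adjacent to both $1$ and $2$, so the only possible neighbourhoods within $K_m$ are $\emptyset$, $\{1\}$ and $\{2\}$. Hence Theorem~\ref{same-neighbourhood} lets me collapse $K_n$ to at most three representatives $a_\emptyset, a_1, a_2$, and the reduced graph is an induced subgraph of $H_2$, the $5$-vertex graph on $\{1,2,a_\emptyset,a_1,a_2\}$ whose edges are $12$, the triangle $a_\emptyset a_1 a_2$, and the two edges $1a_1$ and $2a_2$. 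It remains to $3$-colour $H_2$: colour the triangle $a_\emptyset,a_1,a_2$ with distinct colours $c_1,c_2,c_3$ respectively, then set $1\mapsto c_1$ and $2\mapsto c_2$; one checks this respects the remaining edges $12$, $1a_1$ and $2a_2$. By Corollary~\ref{3-col}, $H_2$ and all its induced subgraphs are word-representable, and by Theorem~\ref{same-neighbourhood} the original graph is word-representable.

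I do not anticipate a genuine obstacle, as the argument is short; the only point requiring care is that $K_n$ on its own is not $3$-colourable for large $n$, so one cannot colour the whole graph directly. What makes the argument work is precisely that the reduction via Theorem~\ref{same-neighbourhood} shrinks the clique $K_n$ down to at most three vertices, after which the bounded graph $H_m$ happens to be $3$-colourable. This is the feature special to small $m$: for larger $m$ the analogous $H_m$ need not be $3$-colourable, which is why the remaining cases in the paper require the more involved forbidden-subgraph analysis.
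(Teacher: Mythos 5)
Your proposal is correct and follows essentially the same route as the paper: reduce via Theorem~\ref{same-neighbourhood} to the bounded graph $H_2$ on five vertices and apply Corollary~\ref{3-col}, with your explicit $3$-colouring merely making concrete what the paper asserts. The only cosmetic difference is in the trivial case $m=1$, where the paper exhibits the representing word $1123\ldots(n+1)$ directly instead of invoking the reduction and $3$-colourability as you do; both are valid.
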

\begin{proof}
Let $m=1$. By definition, there is no edge between $K_1$ and $K_n$ (otherwise, $K_n$ would not be maximal), and the word $1123\ldots(n+1)$
	 represents the graph $K_1$-$K_n$.

Let $m=2$. $H_2$, the largest possible graph $K_2$-$K_n$ we need to consider, is on 5 vertices, $V(K_n)=\{3,4,5\}$, $N_{K_2}(3)=\emptyset$, $N_{K_2}(4)=\{1\}$, and $N_{K_2}(5)=\{2\}$, where $K_2$ is the subgraph of $H_3$ given by the vertices in $\{1,2\}$. Since $H_3$ is 3-colourable, by Corollary~\ref{3-col}, it is word-representable. \end{proof}

\begin{lemma}\label{lemma-K3-Kn}  The graph $A_3$ in Figure~\ref{thm-K3-Kn-pic} is a minimal non-word-representable graph. \end{lemma}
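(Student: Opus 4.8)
The plan is to use Theorem~\ref{fundamental} to recast the statement entirely in terms of semi-transitive orientations, so that ``$A_3$ is a minimal non-word-representable graph'' splits into two claims: (i) $A_3$ admits no semi-transitive orientation, and (ii) deleting any single vertex of $A_3$ produces a graph that does admit one. Since word-representability is hereditary, claim (ii) over all single-vertex deletions suffices for minimality, because every proper induced subgraph of $A_3$ is then an induced subgraph of some word-representable $A_3-v$.

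For claim (i) I would first exploit the $K_3$-$K_n$ structure of $A_3$: in any acyclic orientation each of the two cliques is a transitive tournament and hence linearly ordered, so the only genuinely free choices are the orientations of the cross edges joining the two cliques. Next I would apply Lemma~\ref{source-thm} to assume, without loss of generality, that a well-chosen vertex (the one incident to the most cross edges) is a source; this collapses the global reversal symmetry and cuts the case tree roughly in half. From there I would run the branching procedure described in Subsection~\ref{format-subsec}: whenever a non-clique cycle of length at least four has all but two edges already oriented consistently, I would invoke Lemma~\ref{lemma} to force the remaining edges (the ``O''-instructions), and only branch on a single undetermined cross edge (a ``B''-instruction, spawning a labelled copy) when no such forcing is available. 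Each branch should terminate by exhibiting an explicit shortcut, i.e.\ a directed path $v_0 \to \cdots \to v_k$ together with the edge $v_0 \to v_k$ and a missing transitivity edge, certifying that that branch cannot be completed semi-transitively; exhausting all branches proves non-semi-transitivity.

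For claim (ii) I would use that every graph on at most five vertices is word-representable and that the unique minimal non-word-representable graph on six vertices is the wheel $W_5$, which is \emph{not} a $K_3$-$K_n$ graph (its clique-cover number is three, whereas a $K_3$-$K_n$ graph is covered by two cliques). Consequently the smallest candidates in this family have seven vertices, so $A_3$ has seven vertices and each deletion $A_3-v$ lives on six. It then suffices to check that no $A_3-v$ is isomorphic to $W_5$ (readable from degree sequences or the two-clique cover); where convenient I would instead observe that $A_3-v$ is $3$-colourable and invoke Corollary~\ref{3-col}, or simply display a semi-transitive orientation. Any of these routes certifies word-representability of all seven deletions and hence minimality.

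The main obstacle will be the exhaustive orientation analysis behind claim (i): the cross edges together with the several induced four-cycles of $A_3$ generate many orientation branches, and the proof is only presentable if this tree stays small. The delicate part is therefore the setup—choosing the source vertex supplied by Lemma~\ref{source-thm} and ordering the branchings so that Lemma~\ref{lemma} fires as early and as often as possible, forcing shortcuts quickly rather than leaving long chains of undetermined edges to be resolved by brute force.
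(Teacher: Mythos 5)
Your proposal takes essentially the same route as the paper: the paper proves non-word-representability of $A_3$ by invoking Lemma~\ref{source-thm} to make vertex 10 a source and then running exactly the branching procedure you describe (a single ``B'' branch on the edge between vertices 8 and 9, with Lemma~\ref{lemma} forcing the rest and each of the two branches ending in an explicit shortcut), and it establishes minimality precisely by your claim (ii), namely that no single-vertex deletion of the 7-vertex graph $A_3$ is isomorphic to $W_5$, the unique non-word-representable graph on six vertices. Your extra observations (the two-clique cover of $A_3-v$ versus $W_5$, the optional 3-colourability check) are harmless refinements within the same framework --- the paper additionally shortcuts the whole matter by identifying $A_3$ as Graph~17 in the known list of minimal non-word-representable graphs on 7 vertices --- so all that remains in your plan is to actually execute the small branching tree you outline.
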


\begin{proof}
The graph $A_3$ is Graph 17 in Figure~3 in \cite{KS} containing minimal non-word-representable graphs on 7 vertices. In particular, the minimality follows from the fact that removing any vertex we do not obtain $W_5$, the only non-word-representable graph on 6 vertices. An alternative, short proof of the non-word-representability of $A_3$ goes as follows. Suppose, by Lemma~\ref{source-thm}, that vertex 10 is a source. Then, using Lemma~\ref{lemma} and the proof format from Section~\ref{format-subsec}, we obtain the following, where ``(10)'' refers to the vertex labeled 10: \\[-3mm]
	
\begin{footnotesize}
\noindent
{\bf 1.} B8$\rightarrow$9 (Copy 2) O8$\rightarrow$2 (C2(10)98) O3$\rightarrow$2 (C28(10)3) O3$\rightarrow$9 (C2893) O8$\rightarrow$4 (C2(10)48) O4$\rightarrow$9 (C3(10)49) O1$\rightarrow$9 O8$\rightarrow$1 (C1948) S:819(10)
	
\noindent
{\bf 2.} MC2 9$\rightarrow$8 O2$\rightarrow$8 (C2(10)98) O2$\rightarrow$3 (C28(10)3) O9$\rightarrow$3 (C2893) O4$\rightarrow$8 (C2(10)48) O9$\rightarrow$4 (C3(10)49) O9$\rightarrow$1 O1$\rightarrow$8 (C1948) S:918(10).
\end{footnotesize}
\end{proof}
	
\begin{theorem}\label{thm-K3-Kn}
	A $K_3$-$K_n$ graph is word-representable if and only if it does not contain $A_3$ in Figure~\ref{thm-K3-Kn-pic} as an induced subgraph.
\end{theorem}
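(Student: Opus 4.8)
The plan is to prove both implications, with the forward (necessity) direction being immediate and the converse (sufficiency) carrying the bulk of the work through a branching analysis of the universal graph $H_3$.

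For necessity I would invoke the fact that word-representability is a hereditary property: deleting a letter from a word representing $G$ yields a word representing the corresponding induced subgraph, so every induced subgraph of a word-representable graph is itself word-representable. Since Lemma~\ref{lemma-K3-Kn} establishes that $A_3$ is non-word-representable, any $K_3$-$K_n$ graph containing $A_3$ as an induced subgraph cannot be word-representable. This disposes of the ``only if'' direction.

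For sufficiency the first step is to reduce the infinite family of $K_3$-$K_n$ graphs to a single finite graph. In any such graph each vertex of $K_n$ is joined to a subset of $\{1,2,3\}$ of size at most two (maximality of $K_3$ forbids size three), and two vertices of $K_n$ attached to the same subset have identical neighbourhoods (both being adjacent to all other vertices of $K_n$). By Theorem~\ref{same-neighbourhood} we may delete one of any such pair without affecting word-representability, and since the deleted vertex can always be replaced by its twin, the presence or absence of an induced $A_3$ is preserved as well. Hence every $K_3$-$K_n$ graph reduces to an induced subgraph of $H_3$, the graph on $3+(2^3-1)=10$ vertices, in a manner that respects both word-representability and the containment of $A_3$. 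It therefore suffices to prove that every induced subgraph of $H_3$ that does not contain $A_3$ is word-representable; equivalently, that $A_3$ is the \emph{unique} minimal non-word-representable induced subgraph of $H_3$.

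The second step is a branching process rooted at $H_3$, deleting vertices one at a time in all possible ways as described in Section~\ref{format-subsec}. At each node of the search tree I would inspect the current induced subgraph: if it still contains $A_3$ it is non-word-representable by Lemma~\ref{lemma-K3-Kn}, consistent with the claim, and I continue deleting; as soon as a deletion destroys every copy of $A_3$, I must certify word-representability by exhibiting a semi-transitive orientation (Theorem~\ref{fundamental}) or by appealing to the software of~\cite{Glen, S}, after which the branch terminates, since word-representability is hereditary. The action of $S_3$ on $H_3$, permuting $\{1,2,3\}$ together with the correspondingly-labelled vertices of $K_n$, identifies many branches and prunes the tree substantially, while Lemmas~\ref{lemma} and~\ref{source-thm} restrict the orientations that must be examined when a graph is tested directly. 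The main obstacle is precisely this sufficiency case analysis: one must be certain that no minimal non-word-representable subgraph other than $A_3$ lurks among the induced subgraphs of $H_3$, which means producing an explicit semi-transitive orientation for \emph{every} $A_3$-free induced subgraph that survives the symmetry reduction. I expect that the seven-vertex size of $A_3$ (so that the certified subgraphs are already close in size to $H_3$) combined with the $S_3$-orbit reduction keeps the number of genuinely distinct cases manageable, but enumerating those orbits and supplying an orientation in each is the crux of the argument.
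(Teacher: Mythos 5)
Your proposal follows essentially the same route as the paper: necessity via Lemma~\ref{lemma-K3-Kn} together with the hereditary nature of word-representability, and sufficiency via the reduction to $H_3$ by Theorem~\ref{same-neighbourhood} followed by a symmetry-pruned vertex-deletion branching process whose word-representable leaves are certified by semi-transitive orientations or software. The only part left implicit is the finite enumeration itself, which the paper carries out in four symmetry classes (first removed vertex 1, 4, 5, or 8), exhibiting the word-representable graphs $A_1$, $A_2$, $A_4$, $A_5$ with explicit semi-transitive orientations; your remark that twin deletion preserves the presence or absence of an induced $A_3$ is a correct detail the paper does not spell out.
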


\begin{proof} The graph $H_3$ on 10 vertices is shown in Figure~\ref{thm-K3-Kn-pic}, where the vertices marked by squares represent the clique $K_n$, with its edges omitted for a clearer visual representation of the graph. The graph $A_3$, also shown in Figure~\ref{thm-K3-Kn-pic}, is the only minimal forbidden subgraph of $H_3$. Indeed, using symmetries, we only need to consider four cases, where the first vertex removed corresponds to the smallest label removed.

\begin{itemize}
\item Vertex 1 is removed. By Theorem~\ref{same-neighbourhood}, we can remove vertices 5, 8, 9 as they have the same neighbourhoods as, respectively, vertices 4, 6, 7. The obtained graph, $A_1$, is on 6 vertices, and $A_1\neq W_5$, so $A_1$ is word-representable. In Figure~\ref{thm-K3-Kn-pic} we provide a semi-transitive orientation of $A_1$, hence giving an alternative justification for word-representability of $A_1$ by Theorem~\ref{fundamental}. Therefore, this case does not give us any minimal non-word-representable subgraphs.
\item Vertex 4 is removed. In Figure~\ref{thm-K3-Kn-pic}, we provide a semi-transitive orientation of the obtained graph, $A_2$. The fact that the orientation is semi-transitive can be checked directly via considering directed paths of length 3 or more beginning at vertices 1, 3, 5, 8 and 9 and applying the definition of a semi-transitive orientation. However, a simpler way to verify this claim is to use any of the available user-friendly software \cite{Glen,S}.
\item Vertex 5 is removed. Note that the obtained graph contains the graph $A_3$ in Figure~\ref{thm-K3-Kn-pic} as an induced subgraph, which is minimal non-word-representable by Lemma~\ref{lemma-K3-Kn}. To find other potentially minimal non-word-representable subgraphs, we need to ensure that $A_3$ is not a subgraph in the graph. This can be achieved by removing one of the vertices 8, 9, or 10. Since vertices 8 and 9 have the same neighborhoods, we only need to consider two cases:
\begin{itemize}
\item Vertex 8 is removed. The resulting graph is word-representable, as it is a subgraph of the word-representable graph $A_5$, which is considered below and presented in Figure~\ref{thm-K3-Kn-pic}.
\item Vertex 10 is removed. The obtained graph, $A_4$, is word-representable, and its semi-transitive orientation is presented in Figure~\ref{thm-K3-Kn-pic}. Checking that the orientation is semi-transitive can be done directly, or via software  \cite{Glen,S}.
\end{itemize}
\item Vertex 8 is removed. In Figure~\ref{thm-K3-Kn-pic}, we provide a semi-transitive orientation of the obtained graph, $A_5$. Again, semi-transitivity of the orientation can be checked directly, but it is more easily done using software  \cite{Glen,S}.
\end{itemize}

\vspace{-0.5cm}
\end{proof}
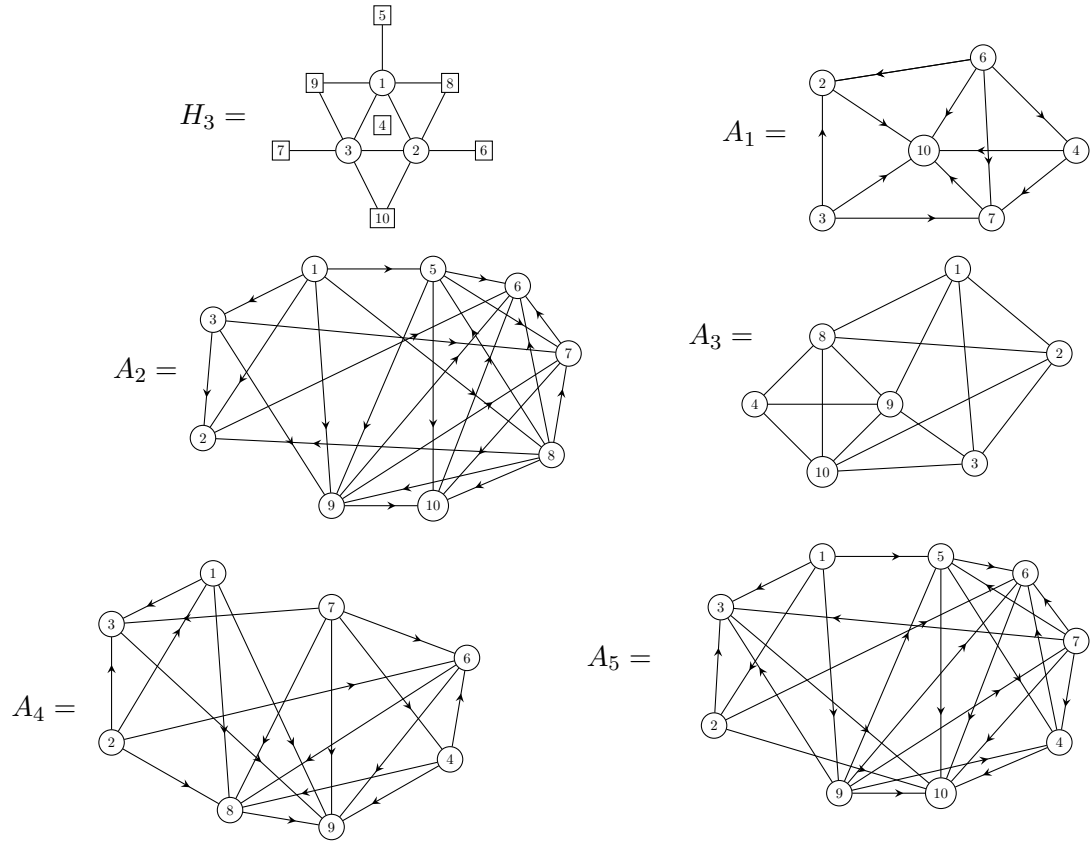
\begin{figure}
\begin{center}
\begin{tikzpicture}[scale=0.45]
			
\begin{scope}[shift={( -10,18)}]
	\draw (4,7) node [scale=0.5, circle,draw](node1){$1$};
	\draw (5,5) node [scale=0.5, circle,draw](node2){$2$};
	\draw (3,5) node [scale=0.5, circle, draw](node3){$3$};
\draw (4,5.75) node [scale=0.5, rectangle, draw](node4){$4$};
\draw (4,9) node [scale=0.5, rectangle,draw](node5){$5$};
\draw (7,5) node [scale=0.5, rectangle, draw](node6){$6$};
\draw (1,5) node [scale=0.5, rectangle, draw](node7){$7$};
\draw (6,7) node [scale=0.5, rectangle, draw](node8){$8$};
\draw (4,3) node [scale=0.5, rectangle, draw](node10){$10$};
\draw (2,7) node [scale=0.5, rectangle, draw](node9){$9$};
				\node at (-1,6) {$H_3=$};
				\draw (node1)--(node2)--(node3)--(node1);
				\draw (node1)--(node5);
				\draw (node2)--(node6);
				\draw (node3)--(node7);
				\draw (node1)--(node8);
				\draw (node2)--(node8);
				\draw (node1)--(node9);
				\draw (node3)--(node9);
				\draw (node2)--(node10);
				\draw (node3)--(node10);
\end{scope}
	\begin{scope}[shift={(6,23)}]
	\draw (4,0) node [scale=0.5, circle, draw](node10){$10$};
	\draw (6,-2) node [scale=0.5, circle, draw](node7){$7$};
	\draw (1,2) node [scale=0.5, circle, draw](node2){$2$};
	\draw (8.5,0) node [scale=0.5, circle, draw](node4){$4$};
	\draw (5.75,2.75) node [scale=0.5, circle, draw](node6){$6$};
	\draw (1,-2) node [scale=0.5, circle, draw](node3){$3$};
	\node at (-1,0.5) {$A_1=$};
	\begin{scope}
	\draw [directed](node6)--(node2);
	\draw [directed](node6)--(node4);
	\draw [directed](node4)--(node7);
	\draw [directed](node3)--(node7);
	\draw [directed](node6)--(node2);
	\draw [directed](node2)--(node10);
	\draw [directed](node3)--(node10);
	\draw [directed](node3)--(node2);
	\draw [directed] (node6)--(node10);
	\draw [directed](node6)--(node7);
	\draw [directed](node4)--(node10);
	\draw [directed](node7)--(node10);
\end{scope}

\end{scope}
\begin{scope}[shift={(-10,15.5)}]
	\draw (2,4) node [scale=0.5, circle, draw](node1){$1$};
	\draw (-1.3,-1) node [scale=0.5, circle, draw](node2){$2$};
	\draw (-1,2.5) node [scale=0.5, circle, draw](node3){$3$};
	
	\draw (5.5,4) node [scale=0.5, circle, draw](node5){$5$};
	\draw (8,3.5) node [scale=0.5, circle, draw](node6){$6$};
	\draw (9.5,1.5) node [scale=0.5, circle, draw](node7){$7$};
	\draw (9,-1.5) node [scale=0.5, circle, draw](node8){$8$};
	\draw (2.5,-3) node [scale=0.5, circle, draw](node9){$9$};
	\draw (5.5,-3) node [scale=0.5, circle, draw](node10){$10$};
	
	\node at (-3,1) {$A_2=$};
	\draw [directed](node1)--(node2);
	\draw [directed](node3)--(node2);
	\draw [directed](node1)--(node3);
	\draw [directed](node1)--(node5);
	\draw [directed](node2)--(node6);
	\draw [directed](node3)--(node7);
	\draw [directed](node1)--(node8);
	\draw [directed](node8)--(node2);
	\draw [directed](node1)--(node9);
	\draw [directed](node3)--(node9);
	\draw [directed](node5)--(node6);
	\draw [directed](node5)--(node7);
	\draw [directed](node8)--(node5);
	\draw [directed](node5)--(node9);
	\draw [directed](node5)--(node10);
	\draw [directed](node7)--(node6);
	\draw [directed](node8)--(node6);
	\draw [directed](node9)--(node6);
	\draw [directed](node10)--(node6);
	\draw [directed](node8)--(node7);
	\draw [directed](node9)--(node7);
	\draw [directed](node7)--(node10);
	\draw [directed](node8)--(node9);
	\draw [directed](node8)--(node10);
	\draw [directed](node9)--(node10);
	
\end{scope}
\begin{scope}[shift={(5,15.5)}]
	\draw (0,0) node [scale=0.5, circle, draw](node4){$4$};
	\draw (4,0) node [scale=0.5, circle, draw](node9){$9$};
	\draw (9,1.5) node [scale=0.5, circle, draw](node2){$2$};
	\draw (2,2) node [scale=0.5, circle, draw](node8){$8$};
	\draw (6,4) node [scale=0.5, circle, draw](node1){$1$};
	\draw (6.5,-1.75) node [scale=0.5, circle, draw](node3){$3$};
	\draw (2,-2) node [scale=0.5, circle, draw](node10){$10$};
	\node at (-1,2) {$A_3=$};
	\draw (node1)--(node2);
	\draw (node1)--(node3);
	\draw (node2)--(node3);
	\draw (node1)--(node9);
	\draw (node3)--(node9);
	\draw (node8)--(node9);
	\draw (node8)--(node4);
	\draw (node8)--(node10);
	\draw (node4)--(node9);
	\draw (node10)--(node4);
	\draw (node10)--(node9);
	\draw (node2)--(node10);
	\draw (node2)--(node8);
	\draw (node1)--(node8);
	\draw (node3)--(node10);

\end{scope}
\begin{scope}[shift={(-5,-9)}]
	\begin{scope}[shift={(-8,15.5)}]
		\draw (2,4) node [scale=0.5, circle, draw](node1){$1$};
		\draw (-1,-1) node [scale=0.5, circle, draw](node2){$2$};
		\draw (-1,2.5) node [scale=0.5, circle, draw](node3){$3$};
		
		\draw (5.5,3) node [scale=0.5, circle, draw](node7){$7$};
		\draw (9.5,1.5) node [scale=0.5, circle, draw](node6){$6$};
		\draw (9,-1.5) node [scale=0.5, circle, draw](node4){$4$};
		\draw (2.5,-3) node [scale=0.5, circle, draw](node8){$8$};
		\draw (5.5,-3.5) node [scale=0.5, circle, draw](node9){$9$};
		
		\node at (-3,0) {$A_4=$};
		\draw [directed](node2)--(node1);
		\draw [directed](node2)--(node3);
		\draw [directed](node1)--(node3);
		\draw [directed](node1)--(node8);
		\draw [directed](node2)--(node8);
		\draw [directed](node1)--(node9);
		\draw [directed](node3)--(node9);
		\draw [directed](node7)--(node3);
		\draw [directed](node2)--(node6);
		\draw [directed](node4)--(node6);
		\draw [directed](node7)--(node4);
		\draw [directed](node4)--(node8);
		\draw [directed](node4)--(node9);
		\draw [directed](node7)--(node6);
		\draw [directed](node6)--(node8);
		\draw [directed](node6)--(node9);
		\draw [directed](node7)--(node8);
		\draw [directed](node7)--(node9);
		\draw [directed](node8)--(node9);
	\end{scope}
\end{scope}
\begin{scope}[shift={(5,7)}]
	\draw (2,4) node [scale=0.5, circle, draw](node1){$1$};
	\draw (-1.2,-1) node [scale=0.5, circle, draw](node2){$2$};
	\draw (-1,2.5) node [scale=0.5, circle, draw](node3){$3$};
	
	\draw (5.5,4) node [scale=0.5, circle, draw](node5){$5$};
	\draw (8,3.5) node [scale=0.5, circle, draw](node6){$6$};
	\draw (9.5,1.5) node [scale=0.5, circle, draw](node7){$7$};
	\draw (9,-1.5) node [scale=0.5, circle, draw](node4){$4$};
	\draw (2.5,-3) node [scale=0.5, circle, draw](node9){$9$};
	\draw (5.5,-3) node [scale=0.5, circle, draw](node10){$10$};
	
	\node at (-4,1) {$A_5=$};
	\draw [directed](node1)--(node2);
	\draw [directed](node2)--(node3);
	\draw [directed](node1)--(node3);
	\draw [directed](node1)--(node5);
	\draw [directed](node2)--(node6);
	\draw [directed](node7)--(node3);
	\draw [directed](node1)--(node9);
	\draw [directed](node9)--(node3);
	\draw [directed](node5)--(node6);
	\draw [directed](node7)--(node5);
	\draw [directed](node5)--(node4);
	\draw [directed](node9)--(node5);
	\draw [directed](node5)--(node10);
	\draw [directed](node7)--(node6);
	\draw [directed](node4)--(node6);
	\draw [directed](node9)--(node6);
	\draw [directed](node6)--(node10);
	\draw [directed](node7)--(node4);
	\draw [directed](node9)--(node7);
	\draw [directed](node7)--(node10);
	\draw [directed](node9)--(node4);
	\draw [directed](node4)--(node10);
	\draw [directed](node9)--(node10);
	\draw [directed](node2)--(node10);
	\draw [directed](node3)--(node10);
	
\end{scope}
														
\end{tikzpicture}
\end{center}
\caption{The graphs $H_3$ and $A_1$--$A_5$ in the proof of Theorem~\ref{thm-K3-Kn}. The vertices marked by squares in $H_3$ form a clique, with its edges omitted for a clearer visual representation of the graph.}\label{thm-K3-Kn-pic}
\end{figure}
		
\section{The word-representability of $K_4$-$K_n$ graphs}\label{sec-K4-Kn}

\begin{lemma}\label{lemma-K4-Kn} The subgraphs of $K_4$-$K_n$ in Figure~\ref{min-forb-sub-K4-Kn} are minimal non-word-representable. \end{lemma}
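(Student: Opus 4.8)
The plan is to prove non-word-representability of each graph in Figure~\ref{min-forb-sub-K4-Kn} by showing it admits no semi-transitive orientation, and then to verify minimality separately. By Theorem~\ref{fundamental}, word-representability is equivalent to semi-transitivity, so the whole argument takes place at the level of orientations. For each candidate graph $G$ I would fix a convenient vertex $v$ and invoke Lemma~\ref{source-thm} to assume, without loss of generality, that $v$ is a source; the natural choice is a vertex attached to all of $K_4$ or a vertex of high degree, since making it a source propagates the most orientation constraints. This mirrors the structure of the proof of Lemma~\ref{lemma-K3-Kn}, where vertex $10$ was taken as a source.

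Next I would run the branching procedure described in Section~\ref{format-subsec}. Because $G$ contains $K_4$ together with several $4$-cycles and longer non-clique cycles, Lemma~\ref{lemma} forces the orientation of many edges once a few are fixed: whenever all but two edges of a non-clique cycle point the same way, the remaining two must point the opposite way. The key steps, in order, are: (i) record the forced outgoing edges from the source $v$; (ii) identify a short non-clique cycle in which only one edge remains free, branch on that edge using the ``B'' instruction to create the two copies, and in each copy apply ``O'' instructions (triangle cycles to avoid directed $3$-cycles, and Lemma~\ref{lemma} to longer cycles) until every edge is oriented; (iii) in each completed branch exhibit an explicit shortcut, i.e.\ a directed path $v_0\to v_1\to\cdots\to v_k$ together with the shortcutting edge $v_0\to v_k$ and a missing edge $v_i\to v_j$ inside the induced subgraph. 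Each line of the proof then terminates with ``S:$\,v_0v_1\cdots v_k$'' as in the format subsection. Doing this for both copies of every branch shows no extension is semi-transitive, so $G$ is non-word-representable.

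For minimality I would argue that deleting any single vertex of $G$ yields a word-representable graph. The cleanest route is to note that each one-vertex deletion produces a graph on fewer vertices that is either $3$-colourable (hence word-representable by Corollary~\ref{3-col}), or can be reduced via Theorem~\ref{same-neighbourhood} by collapsing vertices with identical neighbourhoods to a small graph that is not among the $26$ known non-word-representable graphs on at most $7$ vertices, or for which an explicit semi-transitive orientation can be displayed. Where the deletion leaves a graph on exactly six or seven vertices, it suffices to check it is not $W_5$ and is not one of the $25$ graphs from~\cite{KS}; this is exactly the style of minimality check used for $A_3$ in Lemma~\ref{lemma-K3-Kn}.

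The main obstacle I expect is the sheer combinatorial size of the case analysis: with $K_4$ present, each graph has many edges and several interacting cycles, so the branching tree is wider and deeper than in the $K_3$ case, and one must choose the branching edges carefully so that Lemma~\ref{lemma} forces the remaining orientations quickly rather than leaving many free edges. Picking the wrong source vertex or the wrong first branch can cause the number of lines to balloon. I would therefore select, for each graph, a source and a branching order that maximise forced propagation, and I would lean on the verification software~\cite{Glen,S} to confirm both the non-representability of each whole graph and the representability of every one-vertex deletion, using the explicit orientation-and-shortcut proofs only to make the argument human-checkable in the style mandated by Section~\ref{format-subsec}.
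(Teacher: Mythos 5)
Your proposal takes essentially the same approach as the paper: the paper's proof likewise fixes a well-chosen source vertex for each of $B_1$--$B_7$ via Lemma~\ref{source-thm}, runs the branching format of Section~\ref{format-subsec} (propagating orientations with Lemma~\ref{lemma} and exhibiting a shortcut in every branch), and settles minimality by checking that every one-vertex deletion yields a graph on $6$ or $7$ vertices distinct from $W_5$ and the $25$ known non-word-representable graphs, or by the software~\cite{Glen}. The only difference is that the paper carries out the explicit branching lines for each graph, whereas your outline defers that execution; there is no conceptual gap.
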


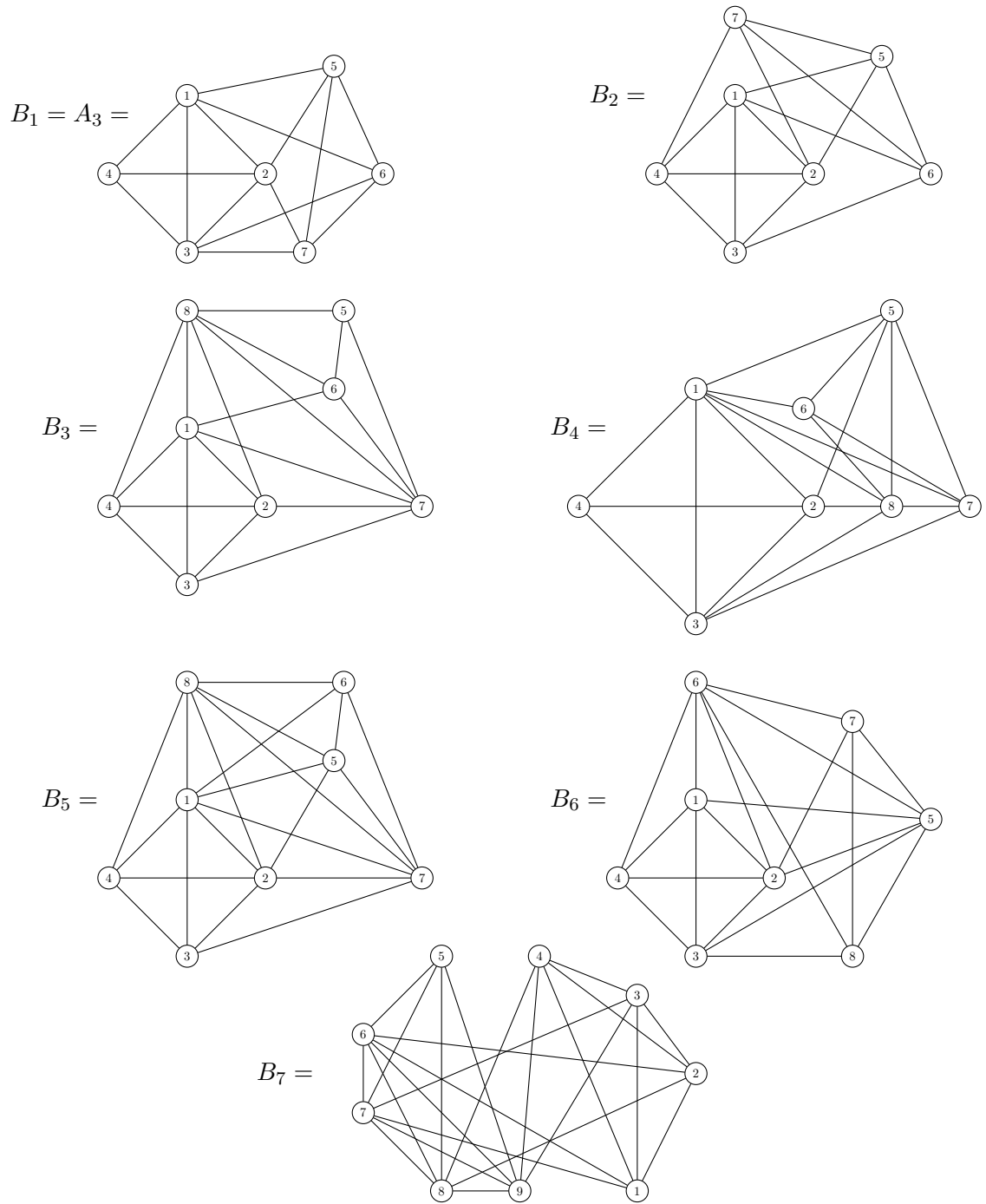
\begin{figure}
\begin{center}
\begin{tikzpicture}[scale=0.6]
	\begin{scope}[shift={(-8,16)}]
		\draw (0,0) node [scale=0.5, circle, draw](node4){$4$};
		\draw (4,0) node [scale=0.5, circle, draw](node2){$2$};
		\draw (5,-2) node [scale=0.5, circle, draw](node7){$7$};
		\draw (2,2) node [scale=0.5, circle, draw](node1){$1$};
		\draw (7,0) node [scale=0.5, circle, draw](node6){$6$};
		\draw (5.75,2.75) node [scale=0.5, circle, draw](node5){$5$};
		\draw (2,-2) node [scale=0.5, circle, draw](node3){$3$};
		\node at (-1,1.5) {$B_1=A_3=$};
\draw (node1)--(node2)--(node3)--(node4)--(node1)--(node3);
\draw (node2)--(node4);
\draw (node1)--(node5);
\draw (node2)--(node5);
\draw (node1)--(node6);
\draw (node3)--(node6);
\draw (node5)--(node7);
\draw (node6)--(node7);
\draw (node5)--(node6);
\draw (node2)--(node7);
\draw (node3)--(node7);
\end{scope}
\begin{scope}[shift={( 6,16)}]
		\draw (0,0) node [scale=0.5, circle, draw](node4){$4$};
		\draw (4,0) node [scale=0.5, circle, draw](node2){$2$};
		\draw (7,0) node [scale=0.5, circle, draw](node6){$6$};
		\draw (2,2) node [scale=0.5, circle, draw](node1){$1$};
		\draw (2,4) node [scale=0.5, circle, draw](node7){$7$};
		\draw (5.75,3) node [scale=0.5, circle, draw](node5){$5$};
		\draw (2,-2) node [scale=0.5, circle, draw](node3){$3$};
	\node at (-1,2) {$B_2=$};	
\draw (node7)--(node6);
\draw (node7)--(node5);
\draw (node6)--(node5);
\draw (node1)--(node2)--(node3)--(node4)--(node1)--(node3);
\draw (node2)--(node4);
\draw (node1)--(node5);
\draw (node2)--(node5);
\draw (node4)--(node7);
\draw (node2)--(node7);
\draw (node1)--(node6);
\draw (node3)--(node6);
\end{scope}
\begin{scope}[shift={( -8,7.5)}]
	\draw (0,0) node [scale=0.5, circle, draw](node4){$4$};
	\draw (4,0) node [scale=0.5, circle, draw](node2){$2$};
	\draw (8,0) node [scale=0.5, circle, draw](node7){$7$};
	\draw (2,2) node [scale=0.5, circle, draw](node1){$1$};
	\draw (2,5) node [scale=0.5, circle, draw](node8){$8$};
	\draw (6,5) node [scale=0.5, circle, draw](node5){$5$};
	\draw (5.75,3) node [scale=0.5, circle, draw](node6){$6$};
	\draw (2,-2) node [scale=0.5, circle, draw](node3){$3$};

	\node at (-1,2) {$B_3=$};
	\draw (node4)--(node1)--(node2)--(node3)--(node4)--(node2);
	\draw (node1)--(node3);
	\draw (node1)--(node8);
	\draw (node2)--(node8);
	\draw (node4)--(node8);
	\draw (node1)--(node7);
	\draw (node2)--(node7);
	\draw (node3)--(node7);
	\draw (node8)--(node5);
	\draw (node6)--(node5);
	\draw (node6)--(node7);
	\draw (node6)--(node8);
	\draw (node6)--(node1);
	\draw (node5)--(node7);
	\draw (node7)--(node8);
\end{scope}
\begin{scope}[shift={(5,7.5)}]
\draw (-1,0) node [scale=0.5, circle, draw](node4){$4$};
\draw (5,0) node [scale=0.5, circle, draw](node2){$2$};
\draw (9,0) node [scale=0.5, circle, draw](node7){$7$};
\draw (2,3) node [scale=0.5, circle, draw](node1){$1$};
\draw (4.75,2.5) node [scale=0.5, circle, draw](node6){$6$};
\draw (7,5) node [scale=0.5, circle, draw](node5){$5$};
\draw (7,0) node [scale=0.5, circle, draw](node8){$8$};
\draw (2,-3) node [scale=0.5, circle, draw](node3){$3$};

\node at (-1,2) {$B_4=$};
\draw (node4)--(node1)--(node2)--(node3)--(node4)--(node2);
\draw (node1)--(node3);
\draw (node1)--(node6);
\draw (node1)--(node5);
\draw (node1)--(node7);
\draw (node1)--(node8);
\draw (node2)--(node8);
\draw (node3)--(node8);
\draw (node3)--(node7);
\draw (node2)--(node5);
\draw (node5)--(node6);
\draw (node5)--(node8);
\draw (node5)--(node7);
\draw (node6)--(node8);
\draw (node6)--(node7);
\draw (node8)--(node7);
\end{scope}
\begin{scope}[shift={(-8,-2)}]
	\draw (0,0) node [scale=0.5, circle, draw](node4){$4$};
	\draw (4,0) node [scale=0.5, circle, draw](node2){$2$};
	\draw (8,0) node [scale=0.5, circle, draw](node7){$7$};
	\draw (2,2) node [scale=0.5, circle, draw](node1){$1$};
	\draw (2,5) node [scale=0.5, circle, draw](node8){$8$};
	\draw (6,5) node [scale=0.5, circle, draw](node6){$6$};
	\draw (5.75,3) node [scale=0.5, circle, draw](node5){$5$};
	\draw (2,-2) node [scale=0.5, circle, draw](node3){$3$};
	\node at (-1,2) {$B_5=$};
	\draw (node4)--(node1)--(node2)--(node3)--(node4)--(node2);
	\draw (node1)--(node3);
	\draw (node1)--(node8);
	\draw (node1)--(node6);
	\draw (node1)--(node5);
	\draw (node1)--(node7);
	\draw (node2)--(node7);
	\draw (node2)--(node5);
	\draw (node8)--(node7);
	\draw (node5)--(node7);
	\draw (node5)--(node6);
	\draw (node5)--(node8);
	\draw (node6)--(node8);
	\draw (node6)--(node7);
	\draw (node4)--(node8);
	\draw (node2)--(node8);
	\draw (node3)--(node7);
\end{scope}
\begin{scope}[shift={(5,-2)}]
	\draw (0,0) node [scale=0.5, circle, draw](node4){$4$};
	\draw (4,0) node [scale=0.5, circle, draw](node2){$2$};
	\draw (8,1.5) node [scale=0.5, circle, draw](node5){$5$};
	\draw (2,2) node [scale=0.5, circle, draw](node1){$1$};
	\draw (2,5) node [scale=0.5, circle, draw](node6){$6$};
	\draw (6,4) node [scale=0.5, circle, draw](node7){$7$};
	\draw (6,-2) node [scale=0.5, circle, draw](node8){$8$};
	\draw (2,-2) node [scale=0.5, circle, draw](node3){$3$};
	\node at (-1,2) {$B_6=$};
	\draw (node4)--(node1)--(node2)--(node3)--(node4)--(node2);
	\draw (node1)--(node3);
	\draw (node3)--(node5);
	\draw (node8)--(node5);
	\draw (node3)--(node8);
	\draw (node5)--(node1);
	\draw (node5)--(node2);
	\draw (node6)--(node1);
	\draw (node6)--(node2);
	\draw (node6)--(node4);
	\draw (node7)--(node6);
	\draw (node7)--(node5);
	\draw (node7)--(node8);
	\draw (node2)--(node7);
	\draw (node5)--(node6);
	\draw (node8)--(node6);
	
\end{scope}	
\begin{scope}[shift={(-1.5,-8)}]
	\draw (7,-2) node [scale=0.5, circle, draw](node1){$1$};
	\draw (8.5,1) node [scale=0.5, circle, draw](node2){$2$};
	\draw (7,3) node [scale=0.5, circle, draw](node3){$3$};
	\draw (4.5,4) node [scale=0.5, circle, draw](node4){$4$};
	\draw (2,4) node [scale=0.5, circle, draw](node5){$5$};
	\draw (0,2) node [scale=0.5, circle, draw](node6){$6$};
	\draw (0,0) node [scale=0.5, circle, draw](node7){$7$};
	\draw (2,-2) node [scale=0.5, circle, draw](node8){$8$};
	\draw (4,-2) node [scale=0.5, circle, draw](node9){$9$};
	
	\node at (-2,1) {$B_7=$};
	\draw (node1)--(node2)--(node3)--(node4)--(node1);
	\draw (node2)--(node4);
	\draw (node1)--(node3);
	\draw (node5)--(node6)--(node7)--(node8)--(node9)--(node5);
	\draw (node5)--(node7);
	\draw (node5)--(node8);
	\draw (node6)--(node8);
	\draw (node6)--(node9);
	\draw (node7)--(node9);
	\draw (node6)--(node1);
	\draw (node6)--(node2);
	\draw (node7)--(node1);
	\draw (node7)--(node3);
	\draw (node8)--(node2);
	\draw (node8)--(node4);
	\draw (node9)--(node3);
	\draw (node9)--(node4);

\end{scope}											
\end{tikzpicture}
\end{center}
\caption{Minimal forbidden subgraphs in $K_4$-$K_n$}\label{min-forb-sub-K4-Kn}
\end{figure}

\begin{proof}  The minimality of $B_1$ and $B_2$ follows from the fact that removing any vertex in it we have a graph on 6 vertices different from $W_5$, and hence word-representable. The minimality of $B_3$--$B_7$ follows from the fact that removing any vertex in it we have a graph on 7 vertices different from any of the 25 non-word-representable graphs in Figure~3 in \cite{KS}.  Alternatively, one can use  the user-friendly software \cite{Glen} to check that the obtained graphs by removing a single vertex are word-representable. 

Next, we prove non-word-representability of $B_1$--$B_7$. In each case we use Lemma~\ref{source-thm} to assume a particular vertex being a source, and the format of the proof presented in Section~\ref{format-subsec} (that relies on Lemma~\ref{lemma}). \\[-3mm]

\noindent
{\normalsize \bf Graph {$B_1$}}. Let vertex 1 be a source.\\[-3mm]
		
		\begin{footnotesize}
		\noindent {\bf 1.} B5$\rightarrow$6 (Copy 2) O5$\rightarrow$2 (C1652) O3$\rightarrow$2 (C1523) O3$\rightarrow$6 (C1632) O4$\rightarrow$2 (C1524) O3$\rightarrow$4 (C1634) O7$\rightarrow$2 O3$\rightarrow$7 (C2734) S:1372
		
		 \noindent {\bf 2.} MC2 6$\rightarrow$5 O2$\rightarrow$5 (C1652) O2$\rightarrow$3 (C1523) O6$\rightarrow$3 (C1632) O2$\rightarrow$4 (C1524) O4$\rightarrow$3 (C1634) O2$\rightarrow$7 O7$\rightarrow$3 (C2734) S:1273 \\[-3mm]
		\end{footnotesize}
		
\noindent {\normalsize \bf Graph \bfseries{$B_2$}}.  Let vertex 1 be a source. \\[-3mm]
		
		\begin{footnotesize}
		\noindent {\bf 1.} B6$\rightarrow$7 (Copy 2) O6$\rightarrow$3 (C1763) O2$\rightarrow$3 (C1632) O2$\rightarrow$5 (C1523) O6$\rightarrow$5 (C1652) O7$\rightarrow$5 (C1752) O2$\rightarrow$4 (C1524) O7$\rightarrow$4 (C1742) S:1674 
		
		\noindent {\bf 2.} MC2 7$\rightarrow$6 O3$\rightarrow$6 (C1763) O3$\rightarrow$2 (C1632) O5$\rightarrow$2 (C1523) O5$\rightarrow$6 (C1652) O5$\rightarrow$7 (C1752) O4$\rightarrow$2 (C1524) O4$\rightarrow$7 (C1742) S:1476 \\[-3mm]
		\end{footnotesize}
		
\noindent
{\normalsize \bf Graph \bfseries{$B_3$}}.  Let vertex 1 be a source. \\[-3mm]
		
		\begin{footnotesize}
		\noindent {\bf 1.} B6$\rightarrow$8 (Copy 2) O3$\rightarrow$8 (C1683) O3$\rightarrow$2 (C1832) O4$\rightarrow$8 (C1684) O4$\rightarrow$2 (C1842) O7$\rightarrow$2 (C1724) O7$\rightarrow$6 (C1672) S:1768
		 
		\noindent {\bf 2.} MC2 8$\rightarrow$6 O8$\rightarrow$3 (C1683) O2$\rightarrow$3 (C1832) O8$\rightarrow$4 (C1684) O2$\rightarrow$4 (C1842) O2$\rightarrow$7 (C1724) O6$\rightarrow$7 (C1672) S:1867 \\[-3mm]
		 \end{footnotesize}
		
\noindent {\normalsize \bf Graph \bfseries{$B_4$}}.  Let vertex 1 be a source.\\[-3mm]
		
		\begin{footnotesize}		
	\noindent {\bf 1.} B7$\rightarrow$8 (Copy 2) O2$\rightarrow$8 (C1782) O5$\rightarrow$8 (C1582) O3$\rightarrow$8 (C1583) O6$\rightarrow$8 (C1683) O2$\rightarrow$4 (C1824) O2$\rightarrow$6 (C1624) O5$\rightarrow$6 (C1562) O7$\rightarrow$6 (C1762) O2$\rightarrow$3 (C1623) O7$\rightarrow$3 (C1732) O7$\rightarrow$5 (C1573) O4$\rightarrow$3 (C1734) S:1438
	
	\noindent {\bf 2.} MC2 8$\rightarrow$7 O8$\rightarrow$2 (C1782) O8$\rightarrow$5 (C1582) O8$\rightarrow$3 (C1583) O8$\rightarrow$6 (C1683) O4$\rightarrow$2 (C1824) O6$\rightarrow$2 (C1624) O6$\rightarrow$5 (C1562) O6$\rightarrow$7 (C1762) O3$\rightarrow$2 (C1623) O3$\rightarrow$7 (C1732) O5$\rightarrow$7 (C1573) O3$\rightarrow$4 (C1734) S:1834 \\[-3mm]
	\end{footnotesize}

\noindent {\normalsize \bf Graph \bfseries{$B_5$}}.  Let vertex 1 be a source.\\[-3mm]
		
		\begin{footnotesize}
	\noindent 	{\bf 1.} B7$\rightarrow$8 (Copy 2) O7$\rightarrow$3 (C1873) O7$\rightarrow$5 (C1573) O7$\rightarrow$2 (C1572) O7$\rightarrow$6 (C1673) O4$\rightarrow$2 (C1724) O6$\rightarrow$2 (C1624) O6$\rightarrow$5 (C1562) O3$\rightarrow$2 (C1623) O8$\rightarrow$2 (C1823) O8$\rightarrow$5 (C1582) O4$\rightarrow$3 (C1734) O4$\rightarrow$8 (C1843) S:1485
		
	\noindent	{\bf 2.} MC2 8$\rightarrow$7 O3$\rightarrow$7 (C1873) O5$\rightarrow$7 (C1573) O2$\rightarrow$7 (C1572) O6$\rightarrow$7 (C1673) O2$\rightarrow$4 (C1724) O2$\rightarrow$6 (C1624) O5$\rightarrow$6 (C1562) O2$\rightarrow$3 (C1623) O2$\rightarrow$8 (C1823) O5$\rightarrow$8 (C1582) O3$\rightarrow$4 (C1734) O8$\rightarrow$4 (C1843) S:1584 \\[-3mm]
		\end{footnotesize}
		
\noindent {\normalsize \bf Graph \bfseries{$B_6$}}.  Let vertex 1 be a source. \\[-3mm]
		
		\begin{footnotesize}
	\noindent	{\bf 1.} B7$\rightarrow$8 (Copy 2) O7$\rightarrow$2 (C1872) O7$\rightarrow$5 (C1572) O7$\rightarrow$3 (C1573) O4$\rightarrow$2 (C1724) O4$\rightarrow$8 (C1842) O4$\rightarrow$3 (C1734) O5$\rightarrow$8 (C1584) O3$\rightarrow$8 (C1583) O3$\rightarrow$2 (C1832) O6$\rightarrow$2 O7$\rightarrow$6 (C2673) S:1762
		
	\noindent	{\bf 2.} MC2 8$\rightarrow$7 O2$\rightarrow$7 (C1872) O5$\rightarrow$7 (C1572) O3$\rightarrow$7 (C1573) O2$\rightarrow$4 (C1724) O8$\rightarrow$4 (C1842) O3$\rightarrow$4 (C1734) O8$\rightarrow$5 (C1584) O8$\rightarrow$3 (C1583) O2$\rightarrow$3 (C1832) O2$\rightarrow$6 O6$\rightarrow$7 (C2673) S:1267\\[-3mm]
		\end{footnotesize}
	
\noindent {\normalsize \bf Graph \bfseries{$B_7$}}.  Let vertex 7 be a source. \\[-3mm]
		
		\begin{footnotesize}
	\noindent	{\bf 1.} B7$\rightarrow$8 (Copy 2) O7$\rightarrow$3 (C3987) O4$\rightarrow$3 (C3794) O4$\rightarrow$8 (C3784) O7$\rightarrow$5 (C3957) O7$\rightarrow$6 (C3967) O5$\rightarrow$8 (C4958) O6$\rightarrow$8 (C4968) B5$\rightarrow$6 (Copy 3) O7$\rightarrow$1 O1$\rightarrow$6 (C1756) S:7168
		
	\noindent	{\bf 2.} MC3 6$\rightarrow$5 O2$\rightarrow$8 O6$\rightarrow$2 (C2856) S:7628
		
	\noindent	{\bf 3.} MC2 8$\rightarrow$7 O3$\rightarrow$7 (C3987) O3$\rightarrow$4 (C3794) O8$\rightarrow$4 (C3784) O5$\rightarrow$7 (C3957) O6$\rightarrow$7 (C3967) O8$\rightarrow$5 (C4958) O8$\rightarrow$6 (C4968) B5$\rightarrow$6 (Copy 4) O8$\rightarrow$2 O2$\rightarrow$6 (C2856) S:8267
		
	\noindent	{\bf 4.} MC4 6$\rightarrow$5 O1$\rightarrow$7 O6$\rightarrow$1 (C1756) S:8617
		\end{footnotesize}
			
\end{proof}

The proof of the following theorem follows the same approach as the proof of Theorem~\ref{thm-K3-Kn}. However, it involves a significantly larger number of cases. To keep the proof concise, we introduce a new proof format and move the two longest out of five cases to the Appendix.  

\begin{theorem}\label{K4-Kn-thm}
	A $K_4$-$K_n$ graph is word-representable if and only if it avoids the graphs $B_1$--$B_7$ in Figure~\ref{min-forb-sub-K4-Kn} as induced subgraphs.
\end{theorem}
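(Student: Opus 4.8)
The plan is to separate the two implications, treating the forward one as immediate and devoting the bulk of the argument to the converse, exactly mirroring the structure used for Theorem~\ref{thm-K3-Kn}. For the ``only if'' direction, suppose a $K_4$-$K_n$ graph $G$ is word-representable. Word-representability is hereditary: restricting a representing word to the letters of an induced subgraph still represents that subgraph (equivalently, by Theorem~\ref{fundamental}, restricting a semi-transitive orientation to an induced subgraph preserves both acyclicity and the absence of shortcuts). Since each of $B_1$--$B_7$ is non-word-representable by Lemma~\ref{lemma-K4-Kn}, $G$ can contain none of them as an induced subgraph. No case analysis is needed here.

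For the ``if'' direction I would prove the contrapositive in the sharp form: \emph{every minimal non-word-representable $K_4$-$K_n$ graph is isomorphic to one of $B_1$--$B_7$}. By Theorem~\ref{same-neighbourhood} it is enough to determine all minimal non-word-representable induced subgraphs of the single universal graph $H_4$. This graph has $4+(2^4-1)=19$ vertices: the clique $K_4$ on $\{1,2,3,4\}$, together with one vertex of the clique $K_n$ for each of the $15$ subsets of $\{1,2,3,4\}$ of size at most $3$ (one vertex attached to no clique vertex, four attached to singletons, six to pairs, and four to triples). The decisive simplification is the natural action of $S_4$ on $\{1,2,3,4\}$: it fixes $K_4$ setwise and partitions the $K_n$-vertices into four orbits by the size of the attached subset. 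Hence, up to isomorphism, the first vertex deleted in the search may be taken from exactly five orbits — a clique vertex, the empty-attachment vertex, a singleton-vertex, a pair-vertex, or a triple-vertex — giving the five cases of the proof.

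Within each case I would run the branching process of Section~\ref{format-subsec}: beginning from $H_4$ with the chosen vertex removed, I delete further vertices one at a time, at every stage using the residual symmetry (the stabilizer of the already-deleted vertices) together with Theorem~\ref{same-neighbourhood} to merge twins, and I terminate a branch the moment a word-representable graph appears. Word-representability of a terminal graph is certified by an explicit semi-transitive orientation via Theorem~\ref{fundamental}, by confirming it is none of the $26$ non-word-representable graphs on at most seven vertices, or most economically by the software of \cite{Glen,S}. Whenever an intermediate graph is non-word-representable I exhibit a concrete induced copy of some $B_i$ inside it; this simultaneously shows the graph is not a new minimal obstruction and dictates the next deletion (a vertex of that copy), which is precisely how one continues hunting for obstructions different from $B_1$--$B_7$. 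When every branch has terminated in a word-representable graph, the only minimal non-word-representable graphs met are $B_1$--$B_7$, proving the claim.

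The main obstacle is sheer bulk: $H_4$ has almost twice the vertices of $H_3$, and whereas the $K_3$ analysis needed only four short cases, here the pair-orbit alone has six vertices and carries the smallest residual symmetry (the stabilizer of a pair is only of order four), so the branching tree for deleting a pair-vertex first is by far the widest; I expect it, together with one of the triple- or singleton-vertex cases, to be the two cases — spanning more than five pages — that I would relegate to the Appendix. To keep even these finite and checkable I would rely throughout on Theorem~\ref{same-neighbourhood} to collapse twin vertices after each deletion (so the graphs actually tested stay small) and on the residual symmetry to prune sub-branches at every level. The one genuine danger is a bookkeeping slip that drops a branch or overlooks an obstruction; I would guard against this by independently re-deriving the final list through an exhaustive computer search over induced subgraphs of $H_4$ with \cite{Glen,S}, as the paper's methodology permits.
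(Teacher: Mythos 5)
Your proposal is correct and follows essentially the same route as the paper: heredity of word-representability plus Lemma~\ref{lemma-K4-Kn} for the forward direction, and for the converse a reduction via Theorem~\ref{same-neighbourhood} to the 19-vertex universal graph, with the $S_4$-symmetry yielding exactly the paper's five cases (clique vertex, empty-, singleton-, pair-, and triple-attachment vertex) and the same branching-by-deletion search certified by software and explicit induced copies of $B_1$--$B_7$. The only (inessential) divergence is your guess about bulk: in the paper the pair- and triple-vertex cases are short because large obstructions like $B_7$ and $B_1$ persist there, and it is the empty- and singleton-vertex cases (Cases~2 and~3) that occupy the Appendix.
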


\begin{figure}
\begin{center}
		\begin{tikzpicture}[scale=0.7]
		\begin{scope}[shift={( 5,-10)}]
		\draw (9,13) node [scale=0.5, circle,draw](node1){$1$};
		\draw (12,13) node [scale=0.5, circle,draw](node2){$2$};
		\draw (12,10) node [scale=0.5, circle, draw](node3){$3$};
		\draw (9,10) node [scale=0.5, circle, draw](node4){$4$};

		\draw (6,13) node [scale=0.5, rectangle,draw](node6){$6$};
		\draw (15,13) node [scale=0.5, rectangle, draw](node7){$7$};
		\draw (15,10) node [scale=0.5, rectangle, draw](node8){$8$};
		\draw (6,10) node [scale=0.5, rectangle, draw](node9){$9$};
		\draw (10.5,16) node [scale=0.5, rectangle, draw](node10){$10$};
		\draw (9.5,16.5) node [scale=0.5, rectangle, draw](node5){$5$};
		\draw (13,16.5) node [scale=0.5, rectangle, draw](node11){$11$};
		\draw (15,11.5) node [scale=0.5, rectangle, draw](node13){$13$};
		\draw (10.5,7) node [scale=0.5, rectangle, draw](node15){$15$};
		\draw (6,11.5) node [scale=0.5, rectangle, draw](node12){$12$};
		\draw (8.25,16) node [scale=0.5, rectangle, draw](node14){$14$};
		\draw (15,17) node [scale=0.5, rectangle, draw](node16){$16$};
		\draw (15,6) node [scale=0.5, rectangle, draw](node19){$19$};
		\draw (6,17) node [scale=0.5, rectangle, draw](node17){$17$};
		\draw (6,6) node [scale=0.5, rectangle, draw](node18){$18$};
		
		\draw (node1)--(node2);
		\draw (node1)--(node3);
		\draw (node1)--(node4);
		\draw (node2)--(node3);
		\draw (node2)--(node4);
		\draw (node4)--(node3);
		\draw (node1)--(node6);
		\draw (node2)--(node7);
		\draw (node3)--(node8);
		\draw (node4)--(node9);
		\draw (node1)--(node10);
		\draw (node2)--(node10);
		\draw (node1)--(node11);
		\draw (node3)--(node11);
		\draw (node1)--(node12);
		\draw (node4)--(node12);
		\draw (node2)--(node13);
		\draw (node3)--(node13);
		\draw (node2)--(node14);
		\draw (node4)--(node14);
		\draw (node3)--(node15);
		\draw (node4)--(node15);
		\draw (node1)--(node16);
		\draw (node2)--(node16);
		\draw (node3)--(node16);
		\draw (node1)--(node17);
		\draw (node2)--(node17);
		\draw (node4)--(node17);
		\draw (node1)--(node18);
		\draw (node3)--(node18);
		\draw (node4)--(node18);
		\draw (node2)--(node19);
		\draw (node3)--(node19);
		\draw (node4)--(node19);
		\node at (4,11.5) {$C=$};
	\end{scope}				
	\end{tikzpicture}
	\end{center}
	\caption{The most general case of a graph $K_4$-$K_n$}\label{most-general-case-K4-Kn}
\end{figure}
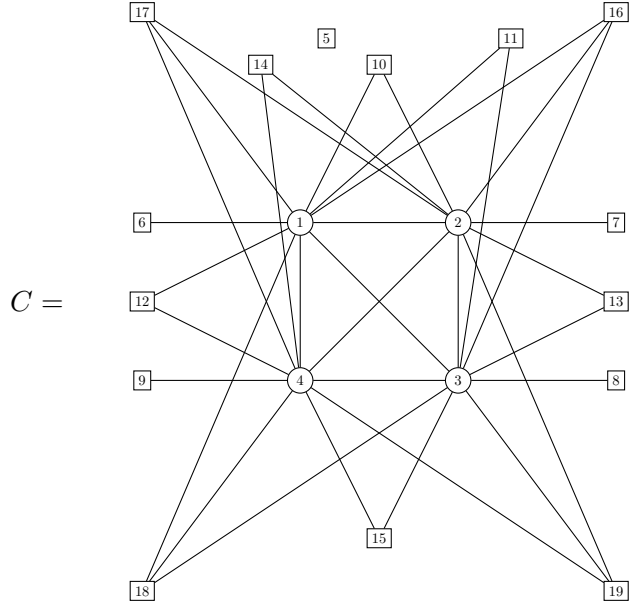

\begin{proof}
By Theorem~\ref{same-neighbourhood}, we can assume that the most general case of a graph $K_4$-$K_n$ is the graph $C$ in Figure~\ref{most-general-case-K4-Kn}, where again we use square boxes to indicate vertices connected to each other in the larger clique. 

Because of the symmetries, we need to consider four cases of removing a lexicographically smallest vertex to find all minimal non-word-representable subgraphs. \\[-3mm]

\noindent
{\bf Case 1:} Vertex $1$ is removed (which is equivalent to removing vertex  $2$, $3$, or $4$). After removing vertices having the same neighbourhoods and renaming the vertices, we obtain a graph isomorphic to the graph $H_3$ in Figure~\ref{thm-K3-Kn-pic}, which is the most general $K_3$-$K_n$ graph considered in Theorem~\ref{thm-K3-Kn}. Therefore, in this case, we obtain a single minimal non-word-representable subgraph, which is $A_3=B_1$.  \\[-3mm]

\noindent
{\bf Case 2:} Vertex $5$ is removed;  vertices $1$--$4$ are retained.  In this case, we obtain the minimal non-word-representable graphs $B_1$--$B_6$. See the Appendix for details. \\[-3mm]

\noindent
{\bf Case 3:} Vertex $6$ is removed (which is equivalent to removing vertex  $7$, $8$, or $9$);  vertices $1$--$5$ are retained. In this case, we obtain the minimal non-word-representable graphs $B_1$--$B_7$. See the Appendix for details. \\[-3mm]

\noindent
{\bf Case 4:}  Vertex $10$ is removed (which is equivalent to removing a vertex  in $\{11,12, 13, 14, 15\}$); vertices $1$--$9$ are retained.  Note that since there is an occurrence of $B_7$ formed by the vertices 1, 2, 3, 4, 5, 11, 12, 13, and 14, we do not need to consider the cases involving the removal of vertices labeled higher than 14 (and retaining the vertices 1--9 and 11--14). Moreover, by symmetry, removing vertex 11 is equivalent to removing a vertex in $\{12, 13, 14\}$, so without loss of generality, we assume that vertices 10 and 11 are removed. Using the fact that removing vertices 13 and 14 gives a graph isomorphic to one obtained by removing the vertices 13 and 15, there are six subcases to consider, and in all of them, we obtain graphs containing $B_2$, formed by 2, 3, 4, 7, 8, 9, and 19: 10--17, 10--14.16--18,  10--13.16--18, 10.11.13.14.16--18, 10.11.14--18, and 10.11.14.16--18; see the Appendix for an explanation of the shorthand notation used here. Hence in Case~4, no other minimal non-word-representable graphs exist other than those in Figure~\ref{min-forb-sub-K4-Kn}. \\[-3mm]

\noindent
{\bf Case 5:}  Vertex $16$ is removed (which is equivalent to removing vertex $17$, $18$, or $19$); vertices $1$--$15$ are retained. This case does not give any new minimal non-word-representable subgraphs, as all graphs in this case contain, for example, a copy of the non-word-representable subgraph $B_1$, formed by vertices 1, 2, 3, 4, 10, 11, and 13 as an induced subgraph. \\[-3mm]

Hence the graphs in $B_1$-$B_7$ in Figure~\ref{min-forb-sub-K4-Kn}  are the only minimal forbidden non-word-representable subgraphs in $K_4$-$K_n$.
\end{proof}

\section{Concluding remarks}\label{concluding}

In this paper, we extended the study of word-representability of split graphs to $K_m$-$K_n$ graphs by characterizing word-representable $K_m$-$K_n$ graphs,  in terms of forbidden subgraphs, for any $1\leq m\leq 4$ and $n\geq 1$. Extending our studies to the cases of $m\geq 5$ presents a challenging problem with our current approach, and we leave this as an open direction for further research.   

\section{Acknowledgments} The first author's research was supported by the China Scholarship Council (CSC) (No. 202308500094) and the Science and Technology Research Program of the Chongqing Municipal Education Commission (No. KJQN202100508).

\appendix
\renewcommand{\thesection}{Appendix.}

\section{Cases 2 \& 3 in the proof of Theorem~\ref{K4-Kn-thm}}

In what follows, we display all possible scenarios for removing one vertex at a time, using symmetries whenever possible to reduce the number of cases to consider. The notation $x_1.x_2.\ldots.x_k$, where $x_1<x_2<\cdots<x_k$, is used to indicate the non-word-representable graph obtained by first removing vertex $x_1$, then vertex $x_2$, and so on, and finally vertex $x_k$, with the property that removing any other vertex with label larger than $x_k$ results in a word-representable graph. This can be verified using software~\cite{Glen, S}.  In each case, we  give a reason for the graph being non-word-representable by indicating which of the graphs in $\{B_1,\ldots,B_7\}$ is contained in it as an induced subgraph (we list vertices forming such a subgraph). Often, we use shorthand notation by replacing $x.(x+1).\cdots.y$ with $x$--$y$; for example, 6.10--14.16.17 represents 6.10.11.12.13.14.16.17. 

Note also that, for example, removing 6--14 and 6--13.15 results in isomorphic graphs; therefore, it is sufficient to consider only the removal of 6--14 (the lexicographically smallest case). We frequently and implicitly use similar observations throughout the proof.   \\[-3mm]

\noindent
{\bf Case 2:}   \\[-3mm]

\begin{footnotesize}
	
\noindent
 $\bullet$ 5--10; contains $B_1$ formed by 1, 2, 3, 4, 11, 12, 15;\\[-3mm]
 
\noindent
 $\bullet$ 5--11; contains $B_1$ formed by 1, 2, 3, 4, 13, 14, 15;\\[-3mm]
 
\noindent
 $\bullet$ 5--12; contains $B_1$ formed by 1, 2, 3, 4, 13, 14, 15;\\[-3mm]
 
\noindent
 $\bullet$ 5--11.13; contains $B_2$ formed by 1, 2, 3, 4, 12, 14, 15;\\[-3mm]
 
\noindent
$\bullet$ 5--8.10--13; contains $B_1$ formed by 1, 2, 3, 9, 17, 18, 19;\\[-3mm]
 
\noindent
 $\bullet$ 5--8.10--14; contains $B_1$ formed by 1, 2, 3, 9, 17, 18, 19;\\[-3mm]

\noindent
 $\bullet$ 5--8.10--15; contains $B_1$ formed by 1, 2, 3, 9, 17, 18, 19;\\[-3mm]

\noindent
 $\bullet$ 5--8.10--14.16; contains $B_1$ formed by 1, 2, 3, 9, 17, 18, 19;\\[-3mm]
 
\noindent
 $\bullet$ 5--8.10--14.16.17; contains $B_5$ formed by 1, 2, 3, 4, 9, 15, 18, 19;\\[-3mm]
 
\noindent
 $\bullet$ 5--8.10--14.17; contains $B_5$ formed by 1, 2, 3, 4, 9, 15, 18, 19;\\[-3mm]
 
\noindent
 $\bullet$ 5--8.10--13.16; contains $B_1$ formed by 1, 2, 3, 9, 17, 18, 19;\\[-3mm]
 
\noindent
 $\bullet$ 5--8.10--13.16--18; contains $B_4$ formed by 1, 2, 3, 4, 9, 14, 15, 19;\\[-3mm]
 
\noindent
 $\bullet$ 5--8.10--13.16.17; contains $B_4$ formed by 1, 2, 3, 4, 9, 14, 15, 19;\\[-3mm]
 
\noindent
 $\bullet$ 5--8.10--13.17; contains $B_4$ formed by 1, 2, 3, 4, 9, 14, 15, 19;\\[-3mm]
 
\noindent
 $\bullet$ 5--8.10--13.17.18; contains $B_4$ formed by 1, 2, 3, 4, 9, 14, 15, 19;\\[-3mm]
 
\noindent
 $\bullet$ 5--8.10--12.14; contains $B_1$ formed by 1, 2, 3, 9, 13, 17, 18;\\[-3mm]
 
\noindent
 $\bullet$ 5--8.10--12.14.15; contains $B_1$ formed by 1, 2, 3, 9, 13, 17, 18;\\[-3mm]
 
\noindent
 $\bullet$ 5--8.10--12.14.16; contains $B_1$ formed by 1, 2, 3, 9, 13, 17, 18;\\[-3mm]
 
\noindent
 $\bullet$ 5--8.10--12.14.16.17; contains $B_1$ formed by 1, 2, 3, 4, 9, 15, 18, 19;\\[-3mm]
 
\noindent
 $\bullet$ 5--8.10.11.13.14; contains $B_1$ formed by 1, 2, 3, 9, 17, 18, 19;\\[-3mm]
 
\noindent
 $\bullet$ 5--8.10.11.13--15; contains $B_1$ formed by 1, 2, 3, 9, 17, 18, 19;\\[-3mm]
 
\noindent
 $\bullet$ 5--8.10.11.13--16; contains $B_1$ formed by 1, 2, 3, 9, 17, 18, 19;\\[-3mm]
 
\noindent
 $\bullet$ 5--8.10.11.13.14.16; contains $B_1$ formed by 1, 2, 3, 9, 17, 18, 19;\\[-3mm]
 
\noindent
 $\bullet$ 5--8.10.11.13.14.16.17; contains $B_4$ formed by 1, 2, 3, 4, 9, 12, 15, 18;\\[-3mm]
 
\noindent
 $\bullet$ 5--8.10.11.13.14.17; contains $B_4$ formed by 1, 2, 3, 4, 9,  12, 15, 18;\\[-3mm]
 
\noindent
 $\bullet$ 5--8.10.11.14; contains $B_1$ formed by 1, 2, 3, 9, 13, 17, 18;\\[-3mm]
 
\noindent
 $\bullet$ 5--8.10.11.14.15; contains $B_1$ formed by 1, 2, 3, 9, 13, 17, 18;\\[-3mm]
 
\noindent
 $\bullet$ 5--8.10.11.14--16; contains $B_1$ formed by 1, 2, 3, 9, 13, 17, 18;\\[-3mm]
 
\noindent
 $\bullet$ 5--8.10.11.14.16; contains $B_1$ formed by 1, 2, 3, 9, 13, 17, 18;\\[-3mm]
 
\noindent
 $\bullet$ 5--8.10.11.14.16.17; contains $B_4$ formed by 1, 2, 3, 4, 9, 12, 15, 18;\\[-3mm]
 
\noindent
 $\bullet$ 5--8.10.11.14.17; contains $B_4$ formed by 1, 2, 3, 4, 9, 12, 15, 18;\\[-3mm]
 
\noindent
 $\bullet$ 5--8.10.12--14; contains $B_1$ formed by 1, 2, 3, 9, 11, 17, 19;\\[-3mm]
 
\noindent
 $\bullet$ 5--8.10.12--15; contains $B_1$ formed by 1, 2, 3, 9, 11, 17, 19;\\[-3mm]
 
\noindent
 $\bullet$ 5--8.10--14.16; contains $B_1$ formed by 1, 2, 3, 9, 11, 17, 19;\\[-3mm]
 
\noindent
 $\bullet$ 5--8.10.12--14.16.17; contains $B_5$ formed by 1, 2, 3, 4, 9, 15, 18, 19;\\[-3mm]
 
\noindent
 $\bullet$ 5--8.10.12--14.16.17.18; contains $B_6$ formed by 1, 2, 3, 4, 9, 11, 15, 19;\\[-3mm]
 
\noindent
 $\bullet$ 5--8.10.12--14.16.18; contains $B_1$ formed by 1, 2, 3, 9, 11, 17, 19;\\[-3mm]
 
\noindent
 $\bullet$ 5--8.10.12--14.18; contains $B_1$ formed by 1, 2, 3, 9, 11, 17, 19;\\[-3mm]
 
\noindent
 $\bullet$ 5--8.10.12.13.16; contains $B_1$ formed by 1, 2, 3, 9, 11, 17, 19;\\[-3mm]
 
\noindent
 $\bullet$ 5--8.10.12.13.15.16; contains $B_1$ formed by 1, 2, 3, 9, 11, 17, 19;\\[-3mm]
 
\noindent
 $\bullet$ 5--8.10.12.13.16--18; contains $B_4$ formed by 1, 2, 3, 4, 9, 14, 15, 19;\\[-3mm]
 
\noindent
 $\bullet$ 5--8.10.12.13.16.18; contains $B_1$ formed by 1, 2, 3, 9, 11, 17, 19;\\[-3mm]
 
\noindent
 $\bullet$ 5--8.10.12.13.17.18; contains $B_4$ formed by 1, 2, 3, 4, 9, 14, 15, 19;\\[-3mm]
 
\noindent
 $\bullet$ 5--8.10.12.13.18; contains $B_1$ formed by 1, 2, 3, 9, 11, 17, 19;\\[-3mm]
 
\noindent
 $\bullet$ 5--8.10.12.15.16; contains $B_1$ formed by 1, 2, 3, 9, 11, 13, 17;\\[-3mm]
 
\noindent
 $\bullet$ 5--8.10.15; contains $B_1$ formed by 1, 2, 3, 9, 11, 13, 17;\\[-3mm]
 
\noindent
 $\bullet$ 5--8.10.15.16; contains $B_1$ formed by 1, 2, 3, 9, 11, 13, 17;\\[-3mm]
 
\noindent
 $\bullet$ 5--8.12--14.15; contains $B_1$ formed by 1, 2, 3, 9, 10, 11, 19;\\[-3mm]
 
\noindent
 $\bullet$ 5--8.12--17; contains $B_1$ formed by 1, 2, 3, 9, 10, 11, 19;\\[-3mm]
 
\noindent
 $\bullet$ 5--8.12--18; contains $B_1$ formed by 1, 2, 3, 9, 10, 11, 19;\\[-3mm]
 
\noindent
 $\bullet$ 5--8.12--14.16.17; contains $B_1$ formed by 1, 2, 3, 9, 10, 11, 19;\\[-3mm]
 
\noindent
 $\bullet$ 5--8.12--14.16--18; contains $B_1$ formed by 1, 2, 3, 9, 10, 11, 19;\\[-3mm]
 
\noindent
 $\bullet$ 5--8.12--14.16.18; contains $B_1$ formed by 1, 2, 3, 9, 10, 11, 19;\\[-3mm]
 
\noindent
 $\bullet$ 5--8.12--14.18; contains $B_1$ formed by 1, 2, 3, 9, 10, 11, 19;\\[-3mm]
 
\noindent
 $\bullet$ 5--8.12.13.16.17; contains $B_1$ formed by 1, 2, 3, 9, 10, 11, 19;\\[-3mm]
 
\noindent
 $\bullet$ 5--8.12.13.17.18; contains $B_1$ formed by 1, 2, 3, 9, 10, 11, 19;\\[-3mm]
 
\noindent
 $\bullet$ 5--7.10--16; contains $B_1$ formed by 1, 2, 3, 9, 17, 18, 19;\\[-3mm]
 
\noindent
 $\bullet$ 5--7.10--15.18; contains $B_6$ formed by 1, 2, 3, 4, 8, 9, 16, 19;\\[-3mm]
 
\noindent
 $\bullet$ 5--7.10--14.16.17.18; contains $B_3$ formed by 1, 2, 3, 4, 8, 9, 15, 19;\\[-3mm]
 
\noindent
 $\bullet$ 5--7.10--14.16.18; contains $B_3$ formed by 1, 2, 3, 4, 8, 9, 15, 19;\\[-3mm]
 
\noindent
 $\bullet$ 5--7.10--14.18; contains $B_3$ formed by 1, 2, 3, 4, 8, 9, 15, 19;\\[-3mm]
 
\noindent
 $\bullet$ 5--7.10--13.15.16; contains $B_1$ formed by 1, 2, 3, 9, 17, 18, 19;\\[-3mm]
 
\noindent
 $\bullet$ 5--7.10--13.15.17.18; contains $B_4$ formed by 1, 2, 3, 8, 9, 14, 16, 19;\\[-3mm]
 
\noindent
 $\bullet$ 5--7.10--13.16--18; contains $B_3$ formed by 1, 2, 3, 4, 8, 9, 15, 19;\\[-3mm]
 
\noindent
 $\bullet$ 5--7.10--13.16.18; contains $B_3$ formed by 1, 2, 3, 4, 8, 9, 15, 19;\\[-3mm]
 
\noindent
 $\bullet$ 5--7.10--13.17.18; contains $B_3$ formed by 1, 2, 3, 4, 8, 9, 15, 19;\\[-3mm]
 
\noindent
 $\bullet$ 5--7.10--13.18; contains $B_3$ formed by 1, 2, 3, 4, 8, 9, 15, 19;\\[-3mm]
 
\noindent
 $\bullet$ 5--7.10.15.16; contains $B_1$ formed by 1, 2, 3, 9, 11, 13, 17;\\[-3mm]
 
\noindent
 $\bullet$ 5--7.11--18; contains $B_2$ formed by 2, 3, 4, 8, 9, 10, 19;\\[-3mm]
 
\noindent
 $\bullet$ 5--7.11--14.16--18; contains $B_2$ formed by 2, 3, 4, 8, 9, 10, 19;\\[-3mm]
 
\noindent
 $\bullet$ 5--7.11--14.16.18; contains $B_2$ formed by 2, 3, 4, 8, 9, 10, 19;\\[-3mm]
 
\noindent
 $\bullet$ 5--7.11--14.18; contains $B_2$ formed by 2, 3, 4, 8, 9, 10, 19;\\[-3mm]
 
\noindent
 $\bullet$ 5--7.11.13--18; contains $B_1$ formed by 1, 2, 4, 8, 10, 12, 19;\\[-3mm]
 
\noindent
 $\bullet$ 5--7.11.13--15.17.18; contains $B_1$ formed by 1, 2, 4, 8, 10, 12, 19;\\[-3mm]
 
\noindent
 $\bullet$ 5--7.11.13.14.16--18; contains $B_1$ formed by 1, 2, 4, 8, 10, 12, 19;\\[-3mm]
 
\noindent
 $\bullet$ 5--7.11.13.14.16.18; contains $B_1$ formed by 1, 2, 4, 8, 10, 12, 19;\\[-3mm]
 
\noindent
 $\bullet$ 5--7.11.13.14.17.18; contains $B_1$ formed by 1, 2, 4, 8, 10, 12, 19;\\[-3mm]
 
\noindent
 $\bullet$ 5--7.11.13.14.18; contains $B_1$ formed by 1, 2, 4, 8, 10, 12, 19;\\[-3mm]
 
\noindent
 $\bullet$ 5--7.11.14--18; contains $B_1$ formed by 1, 2, 4, 8, 10, 12, 19;\\[-3mm]
 
\noindent
 $\bullet$ 5--7.11.14.16--18; contains $B_1$ formed by 1, 2, 4, 8, 10, 12, 19;\\[-3mm]
 
\noindent
 $\bullet$ 5--7.11.14.16.18; contains $B_1$ formed by 1, 2, 4, 8, 10, 12, 19;\\[-3mm]
 
\noindent
 $\bullet$ 5--7.11.14.18; contains $B_1$ formed by 1, 2, 4, 8, 10, 12, 19;\\[-3mm]
 
\noindent
 $\bullet$ 5--7.10.11.13.14--16; contains $B_1$ formed by 1, 2, 3, 9, 17, 18, 19;\\[-3mm]
 
\noindent
 $\bullet$ 5--7.10.11.13--15.17.18; contains $B_1$ formed by 1, 2, 4, 8, 12, 16, 19;\\[-3mm]
 
\noindent
 $\bullet$ 5--7.10.11.13--15.18; contains $B_1$ formed by 1, 2, 4, 8, 12, 16, 19;\\[-3mm]
 
\noindent
 $\bullet$ 5--7.10.11.13.14.16--18; contains $B_3$ formed by 1, 2, 3, 4, 8, 9, 15, 19;\\[-3mm]
 
\noindent
 $\bullet$ 5--7.10.11.13.14.16.18; contains $B_3$ formed by 1, 2, 3, 4, 8, 9, 15, 19;\\[-3mm]
 
\noindent
 $\bullet$ 5--7.10.11.13.14.17.18; contains $B_1$ formed by 1, 2, 4, 8, 12, 16, 19;\\[-3mm]
 
\noindent
 $\bullet$ 5--7.10.11.13.14.18; contains $B_1$ formed by 1, 2, 4, 8, 12, 16, 19;\\[-3mm]
 
\noindent
 $\bullet$ 5--7.10.11.13.15.16; contains $B_1$ formed by 1, 2, 3, 9, 17, 18, 19;\\[-3mm]
 
\noindent
 $\bullet$ 5--7.10.11.14.15.16; contains $B_1$ formed by 1, 2, 3, 9, 13, 17, 18;\\[-3mm]
 
\noindent
 $\bullet$ 5--7.10.11.14.16--18; contains $B_3$ formed by 1, 2, 3, 4, 8, 9, 15, 19;\\[-3mm]
 
\noindent
 $\bullet$ 5--7.10.11.14.16.18; contains $B_3$ formed by 1, 2, 3, 4, 8, 9, 15, 19;\\[-3mm]
 
\noindent
 $\bullet$ 5--7.10.11.14.15.18; contains $B_1$ formed by 1, 2, 4, 8, 12, 16, 19;\\[-3mm]
 
\noindent
 $\bullet$ 5--7.10.11.14.18; contains $B_1$ formed by 1, 2, 4, 8, 12, 16, 19;\\[-3mm]
 
\noindent
 $\bullet$ 5--7.10.11.15.16; contains $B_1$ formed by 1, 2, 3, 9, 13, 17, 18;\\[-3mm]
 
\noindent
 $\bullet$ 5.6.10--17; contains $B_2$ formed by 2, 3, 4, 7, 8, 9, 19;\\[-3mm]
 
\noindent
 $\bullet$ 5.6.10--18; contains $B_2$ formed by 2, 3, 4, 7, 8, 9, 19;\\[-3mm]
 
\noindent 
 $\bullet$ 5.6.10--14.16--18; contains $B_2$ formed by 2, 3, 4, 7, 8, 9, 19;\\[-3mm]
 
\noindent
 $\bullet$ 5.6.10--14.16.18; contains $B_2$ formed by 2, 3, 4, 7, 8, 9, 19;\\[-3mm]
 
\noindent
 $\bullet$ 5.6.10--13.16--18; contains $B_2$ formed by 2, 3, 4, 7, 8, 9, 19;\\[-3mm]
 
\noindent
 $\bullet$ 5.6.10--13.17.18; contains $B_2$ formed by 2, 3, 4, 7, 8, 9, 19;\\[-3mm]
 
\noindent
 $\bullet$ 5.6.10.11.13--18; contains $B_2$ formed by 2, 3, 4, 7, 8, 9, 19;\\[-3mm]
 
\noindent
 $\bullet$ 5.6.10.11.13--16.18; contains $B_2$ formed by 2, 3, 4, 7, 8, 9, 19;\\[-3mm]
 
\noindent
 $\bullet$ 5.6.10.11.13.14.16--18; contains $B_2$ formed by 2, 3, 4, 7, 8, 9, 19;\\[-3mm]
 
\noindent
 $\bullet$ 5.6.10.11.13.14.16.18; contains $B_2$ formed by 2, 3, 4, 7, 8, 9, 19;\\[-3mm]
 
\noindent
 $\bullet$ 5.6.10.11.14.15--17; contains $B_2$ formed by 2, 3, 4, 7, 8, 9, 19;\\[-3mm]
 
\noindent
 $\bullet$ 5.6.10.11.14.15--18; contains $B_2$ formed by 2, 3, 4, 7, 8, 9, 19;\\[-3mm]
 
\noindent
 $\bullet$ 5.6.10.11.14.16--18; contains $B_2$ formed by 2, 3, 4, 7, 8, 9, 19;\\[-3mm]
 
\noindent
 $\bullet$ 5.6.10.11.14.16.18; contains $B_2$ formed by 2, 3, 4, 7, 8, 9, 19;\\[-3mm]
 
\noindent
 $\bullet$ 5.6.10.13--18; contains $B_1$ formed by 1, 3, 4, 7, 11, 12, 19;\\[-3mm]
 
\noindent
 $\bullet$ 5.6.10.13--17; contains $B_1$ formed by 1, 3, 4, 7, 11, 12, 19;\\[-3mm]
 
\noindent
 $\bullet$ 5.6.10.13.15--18; contains $B_1$ formed by 1, 3, 4, 7, 11, 12, 19;\\[-3mm]
 
\noindent
 $\bullet$ 5.6.10.15--18; contains $B_1$ formed by 1, 3, 4, 7, 11, 12, 19;\\[-3mm]
 
\noindent
 $\bullet$ 5.10--17.18; contains $B_1$ formed by 1, 2, 3, 9, 17, 18, 19;\\[-3mm]
 
\noindent
 $\bullet$ 5.10--13.16--18; contains $B_4$ formed by 1, 2, 3, 4, 9, 14, 15, 19;\\[-3mm]
 
\noindent
 $\bullet$ 5.10--14.16--18; contains $B_2$ formed by 2, 3, 4, 7, 8, 9, 19;\\[-3mm]
 
\noindent
 $\bullet$ 5.10.11.13--18; contains $B_2$ formed by 2, 3, 4, 7, 8, 9, 19;\\[-3mm]
 
\noindent
 $\bullet$ 5.10.11.13.14.16--18; contains $B_2$ formed by 2, 3, 4, 7, 8, 9, 19;\\[-3mm]
 
\noindent
 $\bullet$ 5.10.11.13.16--18; contains $B_2$ formed by 2, 3, 4, 7, 8, 9, 19;\\[-3mm]
 
\noindent
 $\bullet$ 5.10.11.14.15--18; contains $B_2$ formed by 2, 3, 4, 7, 8, 9, 19;\\[-3mm]
 
\noindent
 $\bullet$ 5.10.11.14.16--18; contains $B_2$ formed by 2, 3, 4, 7, 8, 9, 19;\\[-3mm]
 
\noindent
 $\bullet$ 5.10.13--18; contains $B_1$ formed by 1, 3, 4, 7, 11, 12, 19;\\[-3mm]
 
 \noindent
 $\bullet$ 5.10.13.15--18; contains $B_1$ formed by 1, 3, 4, 7, 11, 12, 19;\\[-3mm]
 
 \noindent
 $\bullet$ 5.10.15--18; contains $B_1$ formed by 2, 3, 4, 7, 8, 9, 19.\\[-2mm]
 
 \end{footnotesize}

\noindent
{\bf Case 3:} \\[-3mm] 

\begin{footnotesize}

\noindent
$\bullet$ 6--16; contains $B_1$ formed by 1, 2, 3, 5, 17, 18, 19; \\[-3mm]

\noindent
$\bullet$ 6--14.16; contains $B_1$ formed by 1, 2, 3, 5, 17, 18, 19; \\[-3mm]

\noindent
$\bullet$ 6--11.13--16; contains $B_1$ formed by 1, 2, 3, 5, 17, 18, 19; \\[-3mm]

\noindent
 $\bullet$ 6--11.13--15.17; contains $B_1$ formed by 1, 2, 4, 5, 12, 16, 19;\\[-3mm]
 
\noindent
 $\bullet$ 6--11.13--15.17.18; contains $B_1$ formed by 1, 2, 4, 5, 12, 16, 19;\\[-3mm]

\noindent
 $\bullet$ 6--11.13.14.16; contains $B_1$ formed by 1, 2, 3, 5, 17, 18, 19;\\[-3mm]
 
\noindent
 $\bullet$ 6--11.14--16; contains $B_1$ formed by 1, 2, 3, 5, 13, 17, 18;\\[-3mm]

\noindent
 $\bullet$ 6--11.14.16; contains $B_1$ formed by 2, 3, 4, 5, 13, 15, 17;\\[-3mm]

\noindent
 $\bullet$ 6--8.10--17; contains $B_3$ formed by 1, 2, 3, 4, 5, 9, 18, 19;\\[-3mm]

\noindent
 $\bullet$ 6--8.10--16.18; contains $B_3$ 1, 2, 3, 4, 5, 9, 17, 19;\\[-3mm]

\noindent
 $\bullet$ 6--8.10--15.17; contains $B_1$ formed by 1, 2, 4, 5, 16, 18, 19;\\[-3mm]

\noindent
 $\bullet$ 6--8.10--14.16.17; contains $B_3$ formed by 1, 2, 3, 4, 5, 9, 18, 19;\\[-3mm]

\noindent
 $\bullet$ 6--8.10--14.16.18; contains $B_3$ formed by 1, 2, 3, 4, 5, 9, 17, 19;\\[-3mm]

\noindent
 $\bullet$ 6--8.10--14.17; contains $B_1$ formed by 1, 2, 4, 5, 16, 18, 19;\\[-3mm]

\noindent
 $\bullet$ 6--8.10--13.16--18; contains $B_4$ formed by 1, 2, 3, 4, 9, 14, 15, 19;\\[-3mm]

\noindent
 $\bullet$ 6--8.10--13.16.17.19; contains $B_3$ formed by 1, 2, 3, 4, 5, 9, 18, 19; \\[-3mm]

\noindent
 $\bullet$ 6--8.10--13.16.18; contains $B_3$ formed by 1, 2, 3, 4, 5, 9, 17, 19;\\[-3mm]

\noindent
 $\bullet$ 6--8.10.11.13--17; contains $B_3$ formed by 1, 2, 3, 4, 5, 9, 18, 19;\\[-3mm]

\noindent
 $\bullet$ 6--8.10.11.13--15.17; contains $B_1$ formed by 1, 2, 4, 5, 12, 16, 19;\\[-3mm]

\noindent
 $\bullet$ 6--8.10.11.13--15.17.18; contains $B_1$ formed by 1, 2, 4, 5, 12, 16, 19;\\[-3mm]

\noindent
 $\bullet$ 6--8.10.11.13.14.16.17; contains $B_3$ formed by 1, 2, 3, 4, 5, 9, 18, 19;\\[-3mm]

\noindent
 $\bullet$ 6--8.10.11.13.14.16.18; contains $B_3$ formed by 1, 2, 3, 4, 5, 9, 17, 19;\\[-3mm]

\noindent
 $\bullet$ 6--8.10.11.14--17; contains $B_3$ formed by 1, 2, 3, 4, 5, 9, 18, 19;\\[-3mm]

\noindent
 $\bullet$ 6--8.10.11.14.15.17.18; contains $B_1$ formed by 1, 2, 4, 5, 12, 16, 19;\\[-3mm]

\noindent
 $\bullet$ 6--8.10.11.14.16.17; contains $B_3$ formed by 1, 2, 3, 4, 5, 9, 18, 19;\\[-3mm]

\noindent
 $\bullet$ 6--8.10.12--17; contains $B_3$ formed by 1, 2, 3, 4, 5, 9, 18, 19;\\[-3mm]

\noindent
 $\bullet$ 6--8.10.12--16.18; contains $B_1$ formed by 1, 2, 3, 5, 11, 17, 19;\\[-3mm]

\noindent
 $\bullet$ 6--8.10.12--15.18; contains $B_1$ formed by 1, 2, 3, 5, 11, 17, 19;\\[-3mm]

\noindent
 $\bullet$ 6--8.10.12--14.16--18; contains $B_4$ formed by 2, 3, 4, 5, 9, 11, 15, 19;\\[-3mm]

\noindent
 $\bullet$ 6--8.10.12--14.16.17; contains $B_3$ formed by 1, 2, 3, 4, 5, 9, 18, 19;\\[-3mm]

\noindent
 $\bullet$ 6--8.10.12.13.15--17; contains $B_3$ formed by 1, 2, 3, 4, 5, 9, 18, 19;\\[-3mm]

\noindent
 $\bullet$ 6--8.10.12.13.15.17; contains $B_1$ formed by 1, 2, 4, 5, 14, 16, 18;\\[-3mm]

\noindent
 $\bullet$ 6--8.10.12.13.16--18; contains $B_4$ formed by 1, 2, 3, 4, 9, 14, 15, 19;\\[-3mm]

\noindent
 $\bullet$ 6--8.10.12.15--17; contains $B_1$ formed by 2, 3, 4, 5, 13, 14, 18;\\[-3mm]

\noindent
 $\bullet$ 6--8.12--17; contains $B_1$ formed by 1, 2, 3, 5, 10, 11, 19;\\[-3mm]

\noindent
 $\bullet$ 6--8.12--18; contains $B_1$ formed by 1, 2, 3, 5, 10, 11, 19;\\[-3mm]

\noindent
 $\bullet$ 6--8.12--14.16.17; contains $B_1$ formed by 1, 2, 3, 5, 10, 11, 19;\\[-3mm]

\noindent
 $\bullet$ 6--8.12--14.16--18; contains $B_1$ formed by 1, 2, 3, 5, 10, 11, 19;\\[-3mm]

\noindent
 $\bullet$ 6.7.10--17; contains $B_3$ formed by 1, 2, 3, 4, 5, 8, 18, 19;\\[-3mm]

\noindent
 $\bullet$ 6.7.10--16.18; contains $B_3$ formed by 1, 2, 3, 4, 5, 9, 17, 19;\\[-3mm]

\noindent
 $\bullet$ 6.7.10--15.18; contains $B_1$ formed by 1, 3, 4, 5, 16, 17, 19;\\[-3mm]

\noindent
 $\bullet$ 6.7.10--14.16--18; contains $B_3$ formed by 1, 2, 3, 4, 8, 9, 15, 19;\\[-3mm]

\noindent
 $\bullet$ 6.7.10--14.16.18; contains $B_3$ formed by 1, 2, 3, 4, 5, 9, 17, 19;\\[-3mm]

\noindent
 $\bullet$ 6.7.10--13.15--17; contains $B_3$ formed by 1, 2, 3, 4, 5, 8, 18, 19;\\[-3mm]

\noindent
 $\bullet$ 6.7.10--13.15.17.18; contains $B_3$ formed by 1, 2, 3, 4, 5, 8, 16, 19;\\[-3mm]

\noindent
 $\bullet$ 6.7.10--13.16--18; contains $B_3$ formed by 1, 2, 3, 4, 8, 9, 15, 19;\\[-3mm]

\noindent
 $\bullet$ 6.7.10--13.16.18; contains $B_3$ formed by 1, 2, 3, 4, 5, 9, 17, 19;\\[-3mm]

\noindent
 $\bullet$ 6.7.10--12.15--17; contains $B_1$ formed by 2, 3, 4, 5, 13, 14, 18;\\[-3mm]

\noindent
 $\bullet$ 6.7.10.11.13--17; contains $B_3$ formed by 1, 2, 3, 4, 5, 8, 18, 19;\\[-3mm]

\noindent
 $\bullet$ 6.7.10.11.13--16.18; contains $B_3$ formed by 1, 2, 3, 4, 5, 9, 17, 19;\\[-3mm]

\noindent
 $\bullet$ 6.7.10.11.13--15.17.18; contains $B_1$ formed by 1, 2, 4, 5, 12, 16, 19;\\[-3mm]

\noindent
 $\bullet$ 6.7.10.11.13--15.18; contains $B_1$ formed by 1, 2, 4, 5, 12, 16, 19;\\[-3mm]

\noindent
 $\bullet$ 6.7.10.11.13.14.16--18; contains $B_3$ formed by 1, 2, 3, 4, 8, 9, 15, 19;\\[-3mm]

\noindent
 $\bullet$ 6.7.10.11.13.14.16.18; contains $B_3$ formed by 1, 2, 3, 4, 5, 9, 17, 19;\\[-3mm]

\noindent
 $\bullet$ 6.7.10.11.13.15--17; contains $B_3$ formed by 1, 2, 3, 4, 5, 8, 18, 19; \\[-3mm]

\noindent
 $\bullet$ 6.7.10.11.14.15--17; contains $B_3$ formed by 1, 2, 3, 4, 5, 8, 18, 19;\\[-3mm]

\noindent
 $\bullet$ 6.7.10.11.14--16.18; contains $B_3$ formed by 1, 2, 3, 4, 5, 9, 17, 19;\\[-3mm]

\noindent
 $\bullet$ 6.7.10.11.14.15.17.18; contains $B_1$ formed by 1, 2, 4, 5, 12, 16, 19;\\[-3mm]

\noindent
 $\bullet$ 6.7.10.11.14.15.18; contains $B_1$ formed by 1, 2, 4, 5, 12, 16, 19;\\[-3mm]

\noindent
 $\bullet$ 6.7.10.11.14.16--18; contains $B_3$ formed by 1, 2, 3, 4, 8, 9, 15, 19;\\[-3mm]

\noindent
 $\bullet$ 6.7.10.11.15--17; contains $B_1$ formed by 2, 3, 4, 5, 13, 14, 18;\\[-3mm]

\noindent
 $\bullet$ 6.7.11--18; contains $B_2$ formed by 2, 3, 4, 8, 9, 10, 19;\\[-3mm]

\noindent
 $\bullet$ 6.7.11--14.16--18; contains $B_2$ formed by 2, 3, 4, 8, 9, 10, 19;\\[-3mm]

\noindent
 $\bullet$ 6.7.11--14.16.18; contains $B_2$ formed by 2, 3, 4, 8, 9, 10, 19;\\[-3mm]

\noindent
 $\bullet$ 6.7.11--13.16--18; contains $B_2$ formed by 2, 3, 4, 8, 9, 10, 19;\\[-3mm]

\noindent
 $\bullet$ 6.7.11.13--18; contains $B_1$ formed by 1, 2, 4, 5, 10, 12, 19;\\[-3mm]

\noindent
 $\bullet$ 6.7.11.13--15.17.18; contains $B_1$ formed by 1, 2, 4, 5, 10, 12, 19;\\[-3mm]

\noindent 
$\bullet$ 6.7.11.13.14.16--18; contains $B_1$ formed by 1, 2, 4, 5, 10, 12, 19;\\[-3mm]

\noindent
 $\bullet$ 6.7.11.13.14.16.18; contains $B_1$ formed by 1, 2, 4, 5, 10, 12, 19;\\[-3mm]

\noindent
 $\bullet$ 6.7.11.14--18; contains $B_1$ formed by 1, 2, 4, 5, 10, 12, 19;\\[-3mm]

\noindent
 $\bullet$ 6.10--17; contains $B_2$ formed by 2, 3, 4, 7, 8, 9, 19;\\[-3mm]

\noindent
 $\bullet$ 6.10--18; contains $B_2$ formed by 2, 3, 4, 7, 8, 9, 19;\\[-3mm]

\noindent
 $\bullet$ 6.10--14.16--18; contains $B_2$ formed by 2, 3, 4, 7, 8, 9, 19;\\[-3mm]

\noindent
 $\bullet$ 6.10--14.16.18; contains $B_2$ formed by 2, 3, 4, 7, 8, 9, 19;\\[-3mm]

\noindent
 $\bullet$ 6.10--13.16--18; contains $B_2$ formed by 2, 3, 4, 7, 8, 9, 19;\\[-3mm]

\noindent
 $\bullet$ 6.10.11.13--17; contains $B_2$ formed by 2, 3, 4, 7, 8, 9, 19;\\[-3mm]

\noindent
 $\bullet$ 6.10.11.13--18; contains $B_2$ formed by 2, 3, 4, 7, 8, 9, 19;\\[-3mm]

\noindent
 $\bullet$ 6.10.11.13.14.16.18; contains $B_2$ formed by 2, 3, 4, 7, 8, 9, 19;\\[-3mm]

\noindent
 $\bullet$ 6.10.11.13.14.16--18; contains $B_2$ formed by 2, 3, 4, 7, 8, 9, 19;\\[-3mm]

\noindent
 $\bullet$ 6.10.11.14--17; contains $B_2$ formed by 2, 3, 4, 7, 8, 9, 19;\\[-3mm]

\noindent
 $\bullet$ 6.10.11.14--18; contains $B_2$ formed by 2, 3, 4, 7, 8, 9, 19;\\[-3mm]

\noindent
 $\bullet$ 6.10.11.14--16.18; contains $B_2$ formed by 2, 3, 4, 7, 8, 9, 19;\\[-3mm]

\noindent
 $\bullet$ 6.10.11.14.16--18; contains $B_2$ formed by 2, 3, 4, 7, 8, 9, 19;\\[-3mm]

\noindent
 $\bullet$ 6.10.13--17; contains $B_1$ formed by 1, 3, 4, 5, 11, 12, 19;\\[-3mm]

\noindent
 $\bullet$ 6.10.13--18; contains $B_1$ formed by 1, 3, 4, 5, 11, 12, 19;\\[-3mm]

\noindent
 $\bullet$ 6.10.13.15--17; contains $B_1$ formed by 1, 3, 4, 5, 11, 12, 19;\\[-3mm]
 
\noindent
 $\bullet$ 6.10.13.15--18; contains $B_1$ formed by 1, 3, 4, 5, 11, 12, 19.\\[-3mm]

\end{footnotesize}

\end{document}